\newtheorem{thm}{Theorem}[section]
\newtheorem{prop}[thm]{Proposition}
\newtheorem{lem}[thm]{Lemma}
\newtheorem{cor}[thm]{Corollary}
\theoremstyle{definition}
\newtheorem{defn}[thm]{Definition}
\newtheorem{example}[thm]{Example}
\newtheorem{rem}[thm]{Remark}
\newtheorem{alg}[thm]{Algorithm}
\newtheorem{question}[thm]{Question}
\theoremstyle{remark}
\newcommand{\mif}{\mbox{if} ~}
\newcommand{\skipit}[1]{{}}
\newcommand{\prfend}{\hbox to7pt{\hfil}
\par\vskip-\baselineskip\hbox to\hsize
{\hfil\vbox {\hrule width6pt height6pt}}\vskip\baselineskip}
\newcommand {\PP}{\mathbb{P}}
\newcommand{\HH}{H_{\mathfrak m}}
\DeclareMathOperator{\reg}{reg}
\newcommand{\myarrow}[2]{\hbox to #1pt{\hfil$\to$\hfil}{\hskip-#1pt{\raise
10pt\hbox to#1pt{\hfil$\scriptscriptstyle #2$\hfil}}}}
\begin{document}

\title{Numerical Macaulification}

\author{Juan Migliore}

\author{Uwe Nagel}

\address{}

\email{}

%\subjclass{Primary ; Secondary }

\date{}

%%% ----------------------------------------------------------------------

\thanks{This work was partially supported by two grants from the
Simons Foundation (\#208579 to Juan Migliore and \#208869 to Uwe Nagel)}

\begin{abstract}
An unpublished example due to Joe Harris from 1983 (or earlier) gave two smooth space curves with the same Hilbert function, but one of the curves was arithmetically Cohen-Macaulay (ACM) and the other was not.  
Starting with an arbitrary homogeneous ideal in any number of variables, we give two constructions, each of which produces, in a finite number of steps, an ideal with the Hilbert function of a codimension two ACM subscheme.  We call the subscheme associated to such an ideal ``numerically ACM."  We study the connections between these two constructions, and in particular show that they produce ideals with the same Hilbert function.  We call the resulting ideal from either construction  a ``numerical Macaulification'' of the original ideal.  
Specializing to the case where the ideals are unmixed of codimension two, we show that (a) every even liaison class, $\mathcal L$, contains numerically ACM subschemes, (b) the subset, $\mathcal M$, of numerically ACM subschemes in $\mathcal L$ has, by itself, a Lazarsfeld-Rao structure, and (c) the numerical Macaulification of a minimal element of $\mathcal L$  is a minimal element of $\mathcal M$.  Finally, if we further restrict to curves in $\mathbb P^3$, we show that the even liaison class of curves with Hartshorne-Rao module concentrated in one degree and having dimension $n$ contains smooth, numerically ACM curves, for all $n \geq 1$.  The first (and smallest) such example is that of Harris. A consequence of our results is that the knowledge of the Hilbert function of an integral curve alone is not enough to decide whether it contains zero-dimensional arithmetically Gorenstein subschemes of arbitrarily large degree.

\end{abstract}

%%% ----------------------------------------------------------------------
\maketitle
%%% ----------------------------------------------------------------------

\begin{center}
{\em Dedicated to Joe Harris on the occasion of his 60th birthday}
\end{center}

\section{Introduction}

A natural, and very old, problem is to determine what information can be obtained about an algebraic variety, $X$, based on knowledge of its Hilbert function, possibly with some reasonable additional assumptions on $X$.  There is a vast literature on this subject.  It is sometimes the case that one can determine whether or not $X$ is arithmetically Cohen-Macaulay (ACM), based on the Hilbert function.  An old example due to Joe Harris \cite{harris} shows a limitation to this, by exhibiting two smooth curves in $\mathbb P^3$ with the same Hilbert function, but one ACM and the other not.  Harris's example consists of two curves of degree 10 and genus 11, where the non-ACM one is in the liaison class of a set of two skew lines.  In Section \ref{harris ex} we recall this example.

The ACM property provides an important distinction between two curves, even irreducible ones with the same Hilbert function.  For instance, if $X$ is a reduced, irreducible ACM curve then it has arithmetically Gorenstein subsets of arbitrarily large degree \cite{KMMNP}.  However, if $X$ is irreducible but not ACM then we will see that there is a bound on the degree of an arithmetically Gorenstein zero-dimensional subscheme that it contains (Proposition \ref{gor subscheme}).  Thus, the question of whether  the number of degrees of arithmetically Gorenstein subschemes of an integral curve is finite or infinite cannot be solved (in general) only from knowing the Hilbert function of the curve.

In this note we will develop a different approach to this problem than that of Harris.  In Lemma \ref{lem:char-num-aCM} we give several equivalent conditions for a standard graded algebra to have the same Hilbert function as that of a Cohen-Macaulay ideal of codimension two. Some are from the perspective of the graded Betti numbers, and some from the perspective of the Hilbert function, especially the $h$-vector.  These lead to  Algorithm \ref{algorithm} in Section \ref{num mac section} that starts with an arbitrary homogeneous ideal and produces, after a finite number of repetitions of a construction called {\em basic double linkage}, an ideal with the Hilbert function of an ACM codimension two subscheme.  We call such an ideal {\em numerically ACM}. For a later application we also provide an alternative algorithm (cf.\ Algorithm \ref{alg2}) that also takes an arbitrary ideal to one with the Hilbert function of an ACM codimension two subscheme. In fact, we show that its resulting ideal has the same Hilbert function as the ideal obtained from the original ideal via Algorithm \ref{algorithm} (see Proposition \ref{prop:relate-alg}) though it typically has more minimal generators than the latter. We give examples in Section \ref{example section}.

In the event that our original ideal is the ideal of an unmixed codimension two subscheme of $\mathbb P^n$, we show something more: if $\mathcal L$ is an even liaison class of codimension two subschemes of $\mathbb P^n$ then the subset, $\mathcal M$, of numerically ACM  schemes in $\mathcal L$ satisfies a Lazarsfeld-Rao property much like that satisfied by $\mathcal L$ itself.  This is Theorem \ref{LR for num ACM}.  To achieve this, we combine results about the Lazarsfeld-Rao property in $\mathcal L$
with an analysis of the change of the Hilbert function when carrying out Algorithm \ref{alg2}.

In Section \ref{smooth} we  generalize the example of Harris, following our approach, and show in Theorem \ref{family of smooth NACM} that in any liaison class of space curves corresponding to a Hartshorne-Rao module of diameter one and dimension $n$, there are smooth, maximal rank, numerically ACM curves.  The example of Harris is the first case, $n=1$.

%%%%%%%%%%%%%%%%%%%%%%%%%%%%%%%%%%%%%%%%%%%%%%%

\section{Harris's original example} \label{harris ex}

With his permission, we first give the example, due to Joe Harris, of two smooth curves $C$, $C'$ with the same Hilbert function, one ACM and the other not. We quote directly from \cite{harris}.

The point is, to say the Hilbert function is the same for $C$ and $C'$ is to say the ranks of the maps
\[
\rho_n : H^0(\mathbb P^3,\mathcal O(n)) \rightarrow H^0(C,\mathcal O(n)) \ \ \ \hbox{ and } \ \ \
\rho_n' : H^0(\mathbb P^3,\mathcal O(n)) \rightarrow H^0(C',\mathcal O(n))
\]
are the same for all $n$ (of course the degree and genus of $C$ and $C'$ must therefore be the same); but to say that $C$ is ACM and $C'$ is not means that $\rho_n$ is surjective for all $n$, while $\rho_n'$ is not, for some $n$.  Thus we must have $h^0(C,\mathcal O(n)) \neq h^0(C',\mathcal O(n))$ for some $n$; since it would be simplest if $C,C'$ were linearly normal, we might as well take $n=2$ here.  Finally (just as a matter of personal preference) let's arrange for $C'$ to be semicanonical --- i.e. $K_{C'} = \mathcal O_{C'}(2)$, so that $h^0(\mathcal O_{C'}(2)) = g$ --- while $C$ is not, so $h^0(\mathcal O_C(2)) = g-1$.  If we at the same time assume that $C,C'$ do not lie on  quadric surfaces, this then says that $g=11, d=10$.

For $C'$, take $S$ a quartic containing two skew lines $L_1,L_2$, and let $C'$ be a general member of the linear system $C' \in | \mathcal O_S(2)(L_1+L_2)|.$
Since $L_i^2 = -2$ on $S$, $C' \cdot L_i = 0$, --- i.e. $C'$ is disjoint from $L_i$ --- so that, inasmuch as $K_S = O_S$,
\[
K_{C'} = \mathcal O_{C'}(C') = \mathcal O_{C'}(2)
\]
so $C'$ is semicanonical.  Note that $C'$ lies on no cubic surfaces $T$: if it did, we could write $S \cdot T = C' + D$ and then on $S$ we would have $\mathcal O_S(D+L_1 +L_2) = \mathcal O_S(1)$; but $L_1$ and $L_2$ do not lie in a hyperplane.  Thus the Hilbert function of $C'$ is
\[
\begin{array}{rcl}
h(1) & = & 4 \\
h(2) & = & 10 \\
h(3) & = & 20 \\
h(4) & = & 30 \\
 & \cdots \\
h(n) & = & 10n - 10 \hbox{ \ \ ($=$ Hilbert polynomial)}
\end{array}
\]

You may also recognize $C'$ as being residual, in the intersection of two quartic surfaces $S$ and $T$, to a curve of type $(2,4)$ on a quadric; the Hilbert function and semicanonicality can be deduced from this.

As for $C$, let now $S$ be a quartic surface containing a smooth, non-hyperelliptic curve $D$ of degree 6 and genus 3 in $\mathbb P^3$, and let $C$ be a member of the linear series $C \in |\mathcal O_S(1)(D)|$.  Since $D$ does lie on a cubic surface $U$, and we can write $U \cap S = D+E$ where $E$ is again a septic of genus 3, $C$ can be described as the curve residual to a non-hyperelliptic sextic of genus 3 in a complete intersection of quartics --- i.e. $C \in |\mathcal O_S(4)(-E)|$. $C$ is thus linked to a twisted cubic, and so is ACM; likewise, since $D$ lies on no quadrics we see that the Hilbert function of $C$ is equal to that of $C'$.

%%%%%%%%%%%%%%%%%%%%%%%%%%%%%%%%%%%%%%%%%%%%%%%

\section{background}

We will consider homogeneous ideals in the polynomial ring $R = K[x_0,\ldots,x_n]$ or projective subschemes $X \subset \PP^n := \PP^n$ whose codimension is at least two, where $K$ is an infinite field. Thus,  if $I \subset R$ is an ideal of codimension $c$ then the projective dimension of $R/I$ satisfies
\[
c \leq \hbox{pd}(R/I) \leq n+1
\]
and so $\hbox{pd}(I) \leq n$.

The following result will be a key tool throughout this note (cf., e.g, \cite{MN-MathZ} for a proof).

\begin{lem} \label{bdl res}
Let $I \subset R$ be any non-zero ideal.
Let $F \in I$ be any non-zero element of degree $d$, and let $G \in R$ be a general form of degree $a$.  Consider the ideal $J = G \cdot I + (F)$. Then

\begin{itemize}
\item[(a)] $J$ has codimension two.  \\

\item[(b)] If $I$ has a minimal free resolution
 \[
0 \rightarrow \mathbb F_n \rightarrow \mathbb F_{n-1} \rightarrow \cdots \rightarrow \mathbb F_1 \rightarrow I \rightarrow 0
\]
  then $J$ has a free resolution
  \[
0 \rightarrow \mathbb F_n(-a)  \rightarrow \mathbb F_{n-1} (-a) \rightarrow \cdots \rightarrow \mathbb F_3 (-a)  \rightarrow
\begin{array}{c}
\mathbb F_2 (-a) \\
\oplus \\
R(-d-a)
\end{array}
\rightarrow
\begin{array}{c}
\mathbb F_1 (-a) \\
\oplus \\
R(-d)
\end{array}
 \rightarrow J\rightarrow 0.
\]
The only possible cancelation is a copy of $R(-a-d)$ between the first and second free modules, which occurs if and only if there is a minimal generating set for $I$ that includes~$F$. \\

\item[(c)] Let $\mathfrak a = \langle F,G \rangle$.  We have the formula for the Hilbert functions:
\[
h_{R/J}(t) = h_{R/I}(t-a) + h_{R/\mathfrak a}(t).
\]

\item[(d)] There are graded isomorphisms between local cohomology modules with support in the maximal ideal ${\mathfrak m} = (x_0,\ldots,x_n)$
\begin{equation*}
  \HH^i (R/J) \cong \HH^i (R/I)(-a)
\end{equation*}
whenever $i \le n-2$. \\

\item[(e)] If we have $I = I_X$, the saturated ideal of a subscheme $X \subset \mathbb P^n$, then  $J$ is also a saturated ideal, defining a codimension two subscheme $X_1$. \\

\item[(f)] If $X$ is unmixed of  codimension two, then the degree of $X_1$ is $a d  + \deg X$, and, as sets, $X_1$ is the union of $X$ and the complete intersection of $F$ and $G$.
Furthermore, in this case
%we have
%\[
%\bigoplus_t H^i(\mathcal I_{X_1}(t)) \cong  \bigoplus_t H^i(\mathcal I_X(t-a))
%\]
%as graded $R$-modules whenever $1 \leq i \leq n-2$, and
$X$ is linked to $X_1$ in two steps.
\end{itemize}

\end{lem}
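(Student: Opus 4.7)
The core of the proof will be the observation that, because $G$ is general, $F$ and $G$ form a regular sequence, so the natural presentation
\[
0 \to R(-a-d) \xrightarrow{\begin{pmatrix} -F \\ G \end{pmatrix}} I(-a) \oplus R(-d) \xrightarrow{(\,G\ \ F\,)} J \to 0
\]
is exact: surjectivity is just $J = GI + (F)$, and the kernel computation amounts to saying that if $Gi + rF = 0$ with $i \in I$, then $F \mid i$ and $G \mid r$, which is the regular-sequence condition. Nearly every assertion of the lemma flows from this single exact sequence; I would set this up first and then dispatch the parts in the order that minimizes extra work.

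For (a), I would note that $F \in I$ gives $V(I) \subseteq V(F)$, so set-theoretically $V(J) = V(GI) \cap V(F) = V(I) \cup (V(F) \cap V(G))$, and both pieces have codimension at least two (by the standing hypothesis on $I$ and the regularity of $F,G$). For (c), I would apply additivity of Hilbert functions to the sequence, then substitute the Koszul identity $h_{R/(F,G)}(t) = h_R(t) - h_R(t-a) - h_R(t-d) + h_R(t-a-d)$ to rewrite the result as $h_{R/I}(t-a) + h_{R/\mathfrak{a}}(t)$. For (b), I would build a resolution of $I(-a) \oplus R(-d)$ by taking the shifted resolution of $I$ summed with $R(-d)$, lift the map from $R(-a-d)$, and form the mapping cone; the resulting complex has exactly the shape claimed, and the possible cancellation of $R(-a-d)$ is controlled by whether the component of the lift into $\mathbb{F}_1(-a)$ has a unit entry, which happens precisely when $F$ occurs in a minimal generating set of $I$.

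Part (d) is the local-cohomology long exact sequence applied to the same short exact sequence: since $R(-d)$ and $R(-a-d)$ are free, $\HH^i$ of them vanishes for $i \le n$, which yields $\HH^i(J) \cong \HH^i(I(-a))$ for $i \le n-1$, and then the sequence $0 \to J \to R \to R/J \to 0$ shifts indices down by one to give the claimed isomorphism for $i \le n-2$. Part (e) is then immediate: if $I$ is saturated, then $\HH^0(R/I) = 0$, so by (d) $\HH^0(R/J) = 0$, i.e.\ $J$ is saturated (the hypothesis $n \ge 2$ built into the convention that codimension is at least two is exactly what is needed).

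Part (f) is the step I expect to require the most care. The degree formula follows by extracting leading coefficients in the Hilbert polynomial identity of (c), using that $R/\mathfrak{a}$ has dimension $n-1$ and degree $ad$. The set-theoretic statement $X_1 = X \cup V(F,G)$ is the same calculation as in (a). For the linkage claim, since $X$ is unmixed of codimension two and $F \in I_X$, one may choose a general form $G' \in I_X$ of appropriate degree so that $(F,G')$ is a regular sequence contained in $I_X$; linking $X$ via $(F,G')$ produces some residual $Y$, and then linking $Y$ via $(F, GG')$ produces a subscheme whose saturated ideal one checks, using the standard formula for linked ideals together with the resolution in (b), to coincide with $J$. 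The main obstacle is precisely this final identification of the twice-linked ideal with $J$; everything else is bookkeeping on the fundamental exact sequence.
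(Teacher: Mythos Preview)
Your proposal is correct. Note, however, that the paper does not actually supply a proof of this lemma: it simply cites \cite{MN-MathZ} and moves on. The argument you outline---building everything on the short exact sequence
\[
0 \to R(-a-d) \to I(-a)\oplus R(-d) \to J \to 0
\]
coming from the regularity of $(F,G)$, and then reading off in turn the Hilbert-function identity, the mapping-cone resolution, the local-cohomology isomorphisms, saturation, and finally the two-step link via $(F,G')$ followed by $(F,GG')$---is precisely the standard proof one finds in that reference and throughout the liaison literature, so there is nothing to compare against here.
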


\begin{defn} \label{def of bdl}
The ideal $J$ produced in Lemma \ref{bdl res} will be called a {\em basic double link of $I$ of type $(d,a)$}.  We also sometimes call $a$ the {\em height} of the basic double link.
\end{defn}

\noindent Notice that if $X$ is not unmixed of codimension two then the construction given in Lemma \ref{bdl res} is not actually related to linkage, but we retain the terminology since it is the standard one.  We also note that basic double linkage has been generalized, in the context of Gorenstein liaison, to a construction called {\em basic double G-linkage} -- cf. \cite{KMMNP}.

Consider a free resolution
\[
0 \rightarrow \mathbb F_n \rightarrow \cdots \rightarrow \mathbb F_1 \rightarrow R \rightarrow R/I \rightarrow 0.
\]
Let $D$ be the associated Betti diagram.
It is well-known that the Hilbert function of $R/I$ can be computed from $D$, and that any free summand $R(-a)$ that occurs in consecutive $\mathbb F_i$ does not contribute to the Hilbert function computation, and so can be canceled from the numerical computation, even if removing it produces a diagram that is not the Betti diagram of any module.
Several papers have used the idea of formally canceling free summands in this way, in order to obtain useful numerical information (e.g.\ \cite{GL}, \cite{peeva}, \cite{GHMS}, \cite{collect}).  We make a small extension of this idea by applying it to a diagram that may or may not be a Betti diagram.

\begin{defn}
Consider a finite diagram, $D = (d_{i,j})$, with $0 \leq j \leq n$, $0 \leq i$, $d_{0,0} = 1$, and $d_{i,0} = 0$ for $i \geq 2$.  The {\em numerical reduction} of $D$ is the diagram obtained as follows.  Whenever $d_{i,j} > 0$ and $d_{i-1,j+1} > 0$, let $d = \min\{d_{i,j}, d_{i-1,j+1}\}$.  Then replace $d_{i,j}$ by $d_{i,j} - d$ and replace $d_{i-1,j+1}$ by $d_{i-1,j+1}-d$.
Two ideals, $I_1$ and $I_2$, are {\em numerically equivalent} if their Betti diagrams have the same numerical reduction.
\end{defn}

\noindent Clearly two numerically equivalent ideals have the same Hilbert function.

%\begin{defn}
%For integers $r_1 \leq \dots \leq r_\nu$ and $ s_1 \leq \dots \leq s_{\nu-1}$, define
%\[
%m(r_1 \leq \dots \leq r_\nu, s_1 \leq \dots \leq s_{\nu-1}) = \min \{ i | s_k < r_{k+i} \hbox{ for  some } k = 1,\dots,\nu-1 \}.
%\]
%\end{defn}

%
%\begin{lem}[\cite{sauer}]
%For any $n \geq 2$, there exists an ACM codimension two subscheme $Y$ in $\mathbb P^n$ with resolution
%\[
%0 \rightarrow \bigoplus_{i=1}^{\nu-1} R(-s_i) \rightarrow \bigoplus_{i=1}^\nu R(-r_i) \rightarrow I_Y\rightarrow 0
%\]
%if and only if $\nu \geq 2, \sum_{i=1}^\nu r_i = \sum_{i=1}^{\nu-1} s_i$, and
%\[
%m(r_1 \leq \dots \leq r_\nu, s_1 \leq \dots \leq s_{\nu-1}) \geq 2.
%\]
%\end{lem}

\begin{rem} \label{sauer rem}
Fix integers
\[
s_1 \geq s_2 \geq \dots \geq s_{\nu-1} > 0, \ \ \ \   r_1 \geq r_2 \geq \dots \geq r_\nu > 0
\]
such that $\sum_{j=1}^{\nu} r_j = \sum_{i=1}^{\nu-1} s_i$.  Consider the $(\nu-1)\times \nu$ integer matrix
\[
A = (s_i -r_j).
\]
Notice that the columns are non-decreasing from bottom to top, and the rows are non-decreasing from left to right.  Sauer \cite{sauer} remarks without proof (page 84) that there is an ACM curve $Y \subset \mathbb P^3$ with free resolution
\begin{equation} \label{resol}
0 \rightarrow \bigoplus_{i=1}^{\nu-1} R(-s_i) \rightarrow \bigoplus_{j=1}^\nu R(-r_j) \rightarrow I_Y \rightarrow 0
\end{equation}
if and only if the entries of the main diagonal of $A$ are non-negative.  We make some additional remarks.

\begin{enumerate}
\item If we restrict to minimal free resolutions, then we have the stronger condition that the entries of the main diagonal are strictly positive.

\item If we do not restrict to minimal free resolutions, then the criterion of Sauer is not correct.  For instance, choosing $s_1 = 4$ and $r_1 = r_2 = 2$ clearly gives the Koszul resolution for a complete intersection of type $(2,2)$, but adding a trivial summand $R(-1)$ to both free modules produces a non-minimal free resolution with a negative entry on the main diagonal.
\end{enumerate}
\end{rem}

We will adjust Sauer's criterion. It works equally  well for ACM codimension two subvarieties of $\mathbb P^n$. Before stating the result, let us introduce some notation. We denote the Hilbert function of $R/I$ by $h_{R/I} (j) = \dim_K [R/I]_j$. If $I$ has codimension two, then its Hilbert series can be written as
\begin{equation*}
  \sum_{j \ge 0} h_{R/I} (j) z^j = \frac{h_0 + h_1 z + \cdots + h_e z^e}{(1-z)^{n-1}},
\end{equation*}
where $h_e \neq 0$.
Then $h = (h_0,\ldots,h_e)$ is called the  $h$-vector of $R/I$. Equivalently, the $h$-vector is the list of non-zero values of the $(n-1)$st difference of the Hilbert function, $\Delta^{n-1} h_{R/I}$, where $\Delta h_{R/I} (j) = h_{R/I} (j) - h_{R/I} (j-1)$ and $\Delta^{i} h_{R/I} = \Delta (\Delta^{i-1} h_{R/I})$ if $i \ge 1$. Abusing terminology, in this note we define the {\em $h$-vector} of $R/I$ to be always the non-zero values of $\Delta^{n-1} h_{R/I}$, even if $I$ does not have codimension two.
Then we have:

\begin{lem}
  \label{lem:char-num-aCM}
Let $I \subset R$ be a homogeneous ideal whose codimension is at least two. Consider a (not necessarily minimal) free resolution of $R/I$:
\[
0 \rightarrow \bigoplus_{k=1}^n(-c_{n,k}) \rightarrow \bigoplus_{k=1}^n(-c_{n-1,k}) \rightarrow  \cdots \rightarrow \bigoplus_{k=1}^n(-c_{2,k}) \rightarrow \bigoplus_{k=1}^n(-c_{1,k}) \rightarrow R \rightarrow R/I \rightarrow 0.
\]
Let
\[
\underline s = \{ s_i \} = \{ c_{p,q} \  | \  p \hbox{ is even} \}, \ \ \ \underline r = \{ r_j \} = \{ c_{p,q} \ | \ p \hbox{ is odd} \},
\]
and assume that the sets $\underline s$ and $\underline r$ are ordered so that
\[
s_1 \geq s_2 \geq \dots \geq s_{\nu-1}, \ \ \ \   r_1 \geq r_2 \geq \dots \geq r_\nu.
\]
Note that
\[
\sum_{j=1}^{\nu} r_j = \sum_{i=1}^{\nu-1} s_i.
\]

Assume that $s_{\nu - 1} > r_{\nu} \ge 1$. Then the following conditions are equivalent:
\begin{itemize}
  \item[(a)] $I$ has the same Hilbert function as that of a CM ideal of codimension two.

  \item[(b)] The $h$-vector of $R/I$ is an O-sequence.

  \item[(c)] For every integer $k \ge r_{\nu}$,
  \[
  \#\{r_i \ | \ r_i \le k\} > \#\{s_j \ | \ s_j \le k\}.
  \]

  \item[(d)] For all $i = 1,\ldots,\nu - 1$, \; $s_i \geq r_i$.

  \item [(e)] There is a (CM) ideal $J \subset R$ of codimension two having a free resolution of the form
\begin{equation} \label{seq:CM-resol}
0 \rightarrow \bigoplus_{i=1}^{\nu-1} R(-s_i) \rightarrow \bigoplus_{j=1}^\nu R(-r_j) \rightarrow J \rightarrow 0
\end{equation}
\end{itemize}
\end{lem}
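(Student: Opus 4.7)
The plan is to establish the equivalences by proving the cycle $(e) \Rightarrow (a) \Rightarrow (b) \Rightarrow (c) \Leftrightarrow (d) \Rightarrow (e)$. The easiest steps come first. For $(e) \Rightarrow (a)$, the Hilbert series of $R/J$ computed from \eqref{seq:CM-resol} shares the numerator $1 - \sum_j z^{r_j} + \sum_i z^{s_i}$ with the Hilbert series of $R/I$ coming from its given resolution. For $(a) \Rightarrow (b)$, reducing a CM codimension-two quotient by $n-1$ general linear forms produces an Artinian quotient of a polynomial ring in two variables whose Hilbert function is precisely the $h$-vector; by Macaulay's theorem this is an O-sequence.

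For $(d) \Leftrightarrow (e)$ I would use Hilbert-Burch, refining Sauer's criterion from Remark \ref{sauer rem}. Given (d), I build a $(\nu - 1) \times \nu$ matrix $M$ with a generic homogeneous entry of degree $s_i - r_j$ in position $(i,j)$, taking $0$ when this quantity is negative. The orderings of $\underline{s}$ and $\underline{r}$ combined with (d) make every diagonal degree non-negative, and the hypothesis $s_{\nu-1} > r_\nu$ gives the bottom-right diagonal entry strictly positive degree. Genericity then ensures the maximal minors generate a codimension-two ideal, and Hilbert-Burch delivers the resolution \eqref{seq:CM-resol}. The reverse direction reads the inequalities off any such existing resolution.

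The combinatorial step $(c) \Leftrightarrow (d)$ is a counting argument. If (d) holds and $k \geq r_\nu$, then every index $i$ with $s_i \leq k$ also satisfies $r_i \leq s_i \leq k$, so $\#\{s_j \leq k\} \leq \#\{r_j \leq k\}$; the extra element $r_\nu$ makes the inequality strict. Conversely, if $s_{i_0} < r_{i_0}$ for some $i_0$, the choice $k := r_{i_0} - 1$ forces $\#\{s_j \leq k\} \geq \nu - i_0 \geq \#\{r_j \leq k\}$, violating (c). The hypothesis $s_{\nu - 1} > r_\nu$ rules out the degenerate case $r_{i_0} = r_\nu$ (which would otherwise push $k$ below $r_\nu$), since such a case would force $s_{i_0} \geq s_{\nu - 1} > r_\nu = r_{i_0}$, contradicting $s_{i_0} < r_{i_0}$.

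The main obstacle is the implication $(b) \Rightarrow (c)$. My approach is to invoke Macaulay's theorem on O-sequences to produce an Artinian algebra in two variables with Hilbert function equal to the $h$-vector of $R/I$, then lift to a CM codimension-two ideal $J \subset R$ with the same Hilbert function as $I$. By $(d) \Leftrightarrow (e)$ applied to the minimal resolution of $R/J$, the shifts of $J$ satisfy (d) and hence (c). Because $I$ and $J$ have matching Hilbert series, the numerators of their two resolutions coincide, which forces $I$'s resolution shifts to differ from $J$'s by the insertion of ``ghost'' cancellation pairs $(c, c)$ added simultaneously to $\underline{r}$ and $\underline{s}$. The delicate heart of this step is showing that the hypothesis $s_{\nu - 1} > r_\nu$ on $I$'s shifts forces every inserted value $c$ to satisfy $c \geq r_\nu$, so that the strict counting inequality of (c) is preserved when passing from $J$'s minimal resolution to $I$'s possibly non-minimal one. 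Controlling these cancellations is the most subtle part of the argument and is precisely where the hypothesis $s_{\nu-1} > r_\nu \geq 1$ earns its keep.
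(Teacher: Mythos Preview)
Your cycle $(e)\Rightarrow(a)\Rightarrow(b)\Rightarrow(c)\Leftrightarrow(d)\Rightarrow(e)$ matches the paper's $(a)\Rightarrow(b)\Rightarrow(c)\Rightarrow(d)\Rightarrow(e)\Rightarrow(a)$, and your treatments of $(e)\Rightarrow(a)$, $(a)\Rightarrow(b)$, $(c)\Leftrightarrow(d)$, and $(d)\Rightarrow(e)$ are essentially the same as the paper's (the paper uses an explicit bidiagonal matrix with entries $x_0^{s_i-r_i}$ and $x_1^{s_i-r_{i+1}}$ in place of your generic Hilbert--Burch matrix, which is slightly more elementary but equivalent).

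The substantive difference is $(b)\Rightarrow(c)$. The paper does this in one line via Lemma~\ref{lem:h-vector-computtion}: the formula
\[
h(k+1)-h(k)=1-\#\{r_i\le k+1\}+\#\{s_j\le k+1\}
\]
shows that (c) is literally the statement that the $h$-vector is weakly decreasing for $k\ge r_\nu$, and an O-sequence with $h_1\le 2$ is automatically weakly decreasing once it stops increasing strictly. Your route---build an auxiliary CM ideal $J$, invoke $(e)\Rightarrow(d)$ for its minimal resolution, then transport (c) back to $I$ by controlling ghost pairs---can be made to work, but it is considerably heavier and, as you yourself flag, the ghost-pair bookkeeping is delicate. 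The paper's direct computation sidesteps all of this.

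One genuine caution: your sentence ``the reverse direction reads the inequalities off any such existing resolution'' is not correct as stated. For a \emph{non-minimal} resolution of a CM codimension-two ideal, $s_i\ge r_i$ can fail (this is exactly the point of Remark~\ref{sauer rem}(2)). What is true---and what you actually need inside your $(b)\Rightarrow(c)$ argument---is the minimal case, i.e.\ Sauer's criterion that a \emph{minimal} Hilbert--Burch resolution has $s_i>r_i$. You should cite that rather than assert the general claim; otherwise your $(b)\Rightarrow(c)$ rests on an unproved (and false-as-stated) $(e)\Rightarrow(d)$, making the argument look circular. Once you restrict to $J$'s minimal resolution and cite Sauer, your transfer argument goes through, since the hypothesis $s_{\nu-1}>r_\nu$ forces $r_\nu$ to survive all cancellations and hence to equal the initial degree of $J$ as well.
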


As preparation for its proof we need:

\begin{lem}
  \label{lem:h-vector-computtion}
Using the notation of the preceding lemma, the $h$-vector of $R/I$ can be computed using, for each $k$,  the formula
\begin{equation}
  \label{eq:h-vector}
  h(k) = k+1 - \sum_{r_i \le k} (k - r_i + 1) + \sum_{s_j \le k} (k - s_j + 1).
\end{equation}
It follows that
\begin{equation}
  \label{eq:delta-h}
  h (k+1) = h(k) + 1 -  \#\{r_i \ | \ r_i \le k+1\} +  \#\{s_j \ | \ s_j \le k+1\}.
\end{equation}
\end{lem}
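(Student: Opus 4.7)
The plan is to derive the formula directly from the Hilbert series of $R/I$, which is determined by the shifts in the given (not necessarily minimal) resolution. I would first write the Hilbert series as
\[
\sum_{j \ge 0} h_{R/I}(j) z^j \;=\; \frac{1 - \sum_{j=1}^{\nu} z^{r_j} + \sum_{i=1}^{\nu-1} z^{s_i}}{(1-z)^{n+1}},
\]
which is the standard computation from the alternating sum of ranks in the free resolution, using the fact that even-index shifts correspond to the $s_i$ and odd-index shifts to the $r_j$ (and the single copy of $R$ in degree zero contributes the $1$ in the numerator). Because the $h$-vector is defined as the (nonzero entries of the) $(n-1)$st difference of the Hilbert function, multiplying through by $(1-z)^{n-1}$ gives
\[
\sum_{k \ge 0} h(k)\, z^k \;=\; \frac{1}{(1-z)^{2}} - \sum_{j=1}^{\nu} \frac{z^{r_j}}{(1-z)^{2}} + \sum_{i=1}^{\nu-1} \frac{z^{s_i}}{(1-z)^{2}}.
\]

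Next, I would extract coefficients using the elementary expansion $\frac{z^a}{(1-z)^2} = \sum_{k \ge a} (k-a+1) z^k$. Comparing coefficients of $z^k$ on both sides yields exactly formula \eqref{eq:h-vector}:
\[
h(k) \;=\; (k+1) - \sum_{r_i \le k} (k - r_i + 1) + \sum_{s_j \le k} (k - s_j + 1).
\]

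For the second formula, I would simply compute $h(k+1) - h(k)$ term by term. The constant contribution $(k+1)$ contributes $1$. For each $r_i$ with $r_i \le k+1$, the summand $(k+1 - r_i + 1) - (k - r_i + 1) = 1$ if $r_i \le k$, while a newly included index with $r_i = k+1$ contributes its single term $1$; together these account for $\#\{r_i \mid r_i \le k+1\}$. The analogous computation for the $s_j$ is identical with the opposite sign, giving \eqref{eq:delta-h}.

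There is no real obstacle here: the argument is essentially bookkeeping with Hilbert series. The only care needed is to be explicit that the formal sum in the numerator is valid even when the resolution is non-minimal, since cancelling a trivial summand $R(-a)$ appearing in consecutive free modules removes one $r_j$ and one $s_i$ both equal to $a$ and thus leaves both displayed formulas unchanged — consistent with the fact that the Hilbert function depends only on the numerical reduction of the resolution.
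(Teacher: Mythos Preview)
Your proof is correct and takes essentially the same approach as the paper's: both compute the Hilbert function of $R/I$ as the alternating sum of Hilbert functions of the free modules in the resolution, then apply $\Delta^{n-1}$ using $\Delta^{n-1} h_R(k) = k+1$. You phrase this via Hilbert series and coefficient extraction, while the paper applies the difference operator directly to the Hilbert function identity, but the content is identical.
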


\begin{proof}
The additivity of vector space dimension along exact sequences provides for each integer $k$
\begin{equation*}
   h_{R/I} (k) = h_{R}(k) - \sum_{i} h_{R} (k - r_i) + \sum_{j} h_{R} (k - s_j).
\end{equation*}
The claim follows by passing to the $(n-1)$st differences as $\Delta^{n-1} h_{R} (k) = k+1$ if $k \ge 0$.
\end{proof}

Now we are ready to come back to Lemma \ref{lem:char-num-aCM}.

\begin{proof}[Proof of Lemma \ref{lem:char-num-aCM}]
This is probably known to specialists. However, for the convenience of the reader we  provide a brief argument.

If $R/J$ is Cohen-Macaulay of dimension $n-1$, then its $h$-vector is the Hilbert function of $R/(I,\ell_1,\ldots,\ell_{n-1})$, where $\ell_1,\ldots,\ell_{n-1} \in R$ are general linear forms. Thus, (b) is a consequence of (a).

Lemma \ref{lem:h-vector-computtion} shows that (c) follows from (b) because the $h$-vector is weakly decreasing after it stopped to increase strictly.

It is elementary to check that (c) provides (d).

Assume Condition (d) is true. Then let $A = (a_{i, j})$ be the $(\nu - 1) \times \nu$ matrix such that $a_{i, i} = x_0^{s_i - r_i}, \; a_{i, i+1} = x_1^{s_i - r_{i+1}}$, and $a_{i, j} = 0$ if $j \neq i, i+1$. Then the ideal $J$ generated by the maximal minors of $A$ has codimension two and a free resolution of the form \eqref{seq:CM-resol}, establishing Condition (e).

Condition (a) follows from (e) as $I$ and $J$ have the same Hilbert function.
\end{proof}

We need the following consequence.

\begin{cor}
\begin{itemize}
  \item[(a)] Fix integers $s_1 \geq s_2 \geq \dots \geq s_{\nu-1} > 0$ and  $ r_1 \geq r_2 \geq \dots \geq r_\nu > 0$
such that $\sum_{j=1}^{\nu} r_j = \sum_{i=1}^{\nu-1} s_i$.  Consider the $(\nu-1)\times \nu$ integer matrix $A = (s_i -r_j)$.  Then there is a codimension two subscheme $Y \subset \mathbb P^n$ with minimal free resolution \eqref{resol} if and only if the entries of the main diagonal of $A$ are strictly positive.

  \item[(b)] Let $\underline s$ and $\underline r$ be sets satisfying the conditions in (a).  Suppose that equal entries  are added to both sets, all $ \geq r_\nu$  (corresponding to the addition of trivial summands $R( \ )$ to both free modules in the resolution).  Let $f(t) = \# \{ i \ | \ s_i \leq t \}$ and $g(t) = \# \{j \ | \  r_j  \leq t \}$.  Then for all $t$ we have $g(t) > f(t)$.
\end{itemize}

\end{cor}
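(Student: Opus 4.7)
My plan is to derive both parts from Lemma~\ref{lem:char-num-aCM}, with part (a) requiring an extra Hilbert--Burch argument to upgrade from ``some resolution exists'' to ``the minimal resolution has this shape.'' For the ``if'' direction of (a), I would reuse the explicit construction from the proof of Lemma~\ref{lem:char-num-aCM}(e): the bidiagonal $(\nu-1)\times \nu$ matrix $A$ whose main diagonal is $x_0^{s_i-r_i}$, whose super-diagonal is $x_1^{s_i-r_{i+1}}$, and whose other entries are zero. Because $s_i>r_i\geq r_{i+1}$, every nonzero entry of $A$ has strictly positive degree, so no entry is a unit and the associated Hilbert--Burch resolution is already minimal. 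A short computation shows that two of the maximal minors of $A$ are pure powers of $x_0$ and $x_1$ respectively, so the ideal of maximal minors sits inside $(x_0,x_1)$ but has radical containing both variables; hence it has codimension exactly two and defines the desired $Y$.

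The ``only if'' direction is the main technical obstacle. Suppose $Y$ has a minimal free resolution of the stated shape. Then $R/I_Y$ is Cohen--Macaulay of codimension two, and Lemma~\ref{lem:char-num-aCM} (a)$\Rightarrow$(d) already forces $s_i\geq r_i$ for every $i$, so only equality needs to be ruled out. Assuming $s_i=r_i$ for some $i$, Hilbert--Burch realizes $I_Y$ as the ideal of maximal minors of a matrix $A$ whose $(k,j)$-entry has degree $s_k-r_j$, and minimality forces every degree-zero entry of $A$ to vanish (otherwise it would be a unit, allowing cancellation). For $k\geq i$ and $j\leq i$ the degrees $s_k-r_j$ are all $\leq s_i-r_i=0$, so the entire $(\nu-i)\times i$ lower-left block of $A$ is zero. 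Expanding the maximal minors block-wise, $\det A_j=0$ for $j>i$ (the bottom $\nu-i$ rows lie in only $\nu-i-1$ nonzero column positions), while for $j\leq i$ every $\det A_j$ is a scalar multiple of $\det R$, where $R$ is the fixed $(\nu-i)\times(\nu-i)$ bottom-right block of $A$. The diagonal of $R$ has degree-sum at least $s_{\nu-1}-r_\nu>0$, so $\det R$ is either zero or a nonconstant homogeneous polynomial; in the first case $I_Y=0$, in the second $I_Y\subseteq(\det R)$ has codimension at most one, contradicting codimension two in either case.

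For part (b), Lemma~\ref{lem:char-num-aCM} (a)$\Rightarrow$(c) applied to the original sets yields $g(t)>f(t)$ for every $t\geq r_\nu$. Since the added entries are the same in $\underline{s}$ and $\underline{r}$, both $f(t)$ and $g(t)$ grow by the same amount at each $t$, so the strict inequality is preserved. For $t<r_\nu$ both counts remain zero (every original and added entry is $\geq r_\nu$), so the stated inequality is understood on the natural range $t\geq r_\nu$ inherited from condition (c).
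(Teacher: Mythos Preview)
Your proposal is correct and follows the route the paper intends: the corollary is stated in the paper without proof as an immediate consequence of Lemma~\ref{lem:char-num-aCM} (together with Sauer's observation in Remark~\ref{sauer rem}), and parts (b) and the ``if'' direction of (a) indeed fall out of that lemma exactly as you describe.

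Where you go beyond the paper is in the ``only if'' direction of (a): the paper simply appeals to Sauer (Remark~\ref{sauer rem}(1)) for the strictness $s_i>r_i$, while you supply a self-contained Hilbert--Burch block argument. Your argument is sound. One small point worth making explicit is why Lemma~\ref{lem:char-num-aCM} applies at all, since it assumes $s_{\nu-1}>r_\nu$: in a \emph{minimal} resolution this is automatic, because if $s_{\nu-1}\le r_\nu$ then the row of the Hilbert--Burch matrix indexed by $s_{\nu-1}$ would consist entirely of entries of nonpositive degree, hence (by minimality) would be zero, contradicting injectivity of the syzygy map. With that in hand, your block decomposition and the computation that $\deg\det R=\sum_{m=i}^{\nu-1}(s_m-r_{m+1})\ge s_{\nu-1}-r_\nu>0$ are correct and give the contradiction.

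Your closing remark on part (b) is also apt: the statement ``for all $t$'' is literally false for $t<r_\nu$, where $f(t)=g(t)=0$; the intended range is $t\ge r_\nu$, inherited from condition (c) of Lemma~\ref{lem:char-num-aCM}.
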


%%%%%%%%%%%%%%%%%%%%%%%%%%%%%%%%%%%%%%%%%%%%%%%

\section{Numerical Macaulification: Two Algorithms} \label{num mac section}

In this section we will introduce some terminology used throughout the paper.  The main goal of the section, though, is to give two algorithms to produce, from an arbitrary ideal $I$ of height $\geq 2$, a numerically ACM ideal, using only basic double linkage.  The Hilbert functions of the two resulting ideals turn out to be equal.  Thus we will call the end result of these constructions the {\em numerical Macaulification} of $I$.  The result is numerically unique but not unique as an ideal, since there are two algorithms, and even within one algorithm there are several choices of polynomials (of fixed degree).

\begin{defn}
A homogeneous ideal $J \subset R$  is {\em numerically $r$-ACM} if $R/J$ has the Hilbert function of some codimension $r$ ACM subscheme of $\mathbb P^n$.  When $r=2$ we will simply say that $J$ is {\em numerically ACM}.
\end{defn}

The main result of this section is the following.

\begin{thm}
If I is an ideal whose codimension is at least two, then there is a finite sequence of basic double links, starting from $I$, that results in an ideal that is numerically ACM.
\end{thm}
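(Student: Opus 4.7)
My plan is to give an iterative procedure using basic double linkage and establish termination by a clean combinatorial argument. By Lemma \ref{lem:char-num-aCM}, $R/I$ is numerically ACM precisely when the sorted shift sequences $\underline{r} = (r_1 \geq \cdots \geq r_\nu)$ and $\underline{s} = (s_1 \geq \cdots \geq s_{\nu-1})$ from a minimal free resolution of $R/I$ satisfy $s_i \geq r_i$ for $i = 1, \ldots, \nu - 1$. I will measure progress by the violation set $V(I) = \{i : s_i < r_i\}$ and show it can be emptied by finitely many basic double links.

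If $V(I)$ is nonempty, I let $i_0 = \min V(I)$ and perform a basic double link of type $(d, a)$ with $d = r_{i_0}$ (a minimal generator degree of $I$) and $a$ a positive integer large enough that the value $d$ sorts to the bottom of the new shift multiset. Since $F$ can be taken as a minimal generator of $I$ of degree $d$, the cancellation in Lemma \ref{bdl res}(b) yields
\[
\underline{r}_{\text{new}} = \{r_j + a : j \neq i_0\} \cup \{d\}, \qquad \underline{s}_{\text{new}} = \{s_i + a : 1 \leq i \leq \nu - 1\}.
\]
After resorting, the comparison at new position $i < i_0$ is just the uniform shift of the old one, so no new violation arises there; at position $i \in [i_0, \nu - 1]$, the new sorted $\underline{r}$-value is $r_{i+1} + a$ in old indices, so the new comparison becomes $s_i \geq r_{i+1}$.

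For termination, the new violation set $V'$ lies in $[i_0, \nu - 2]$, because the boundary comparison $s_{\nu-1} \geq r_\nu$ holds by the standing hypothesis of Lemma \ref{lem:char-num-aCM}. Moreover the shift map $\phi(i) = i + 1$ injects $V'$ into $V(I) \cap [i_0 + 1, \nu - 1]$: if $s_i < r_{i+1}$, then $s_{i+1} \leq s_i < r_{i+1}$, so $i + 1 \in V(I)$. Since $i_0 \in V(I)$ while $i_0$ is not in the image of $\phi$, we obtain $|V'| \leq |V(I)| - 1$. Thus $|V|$ strictly decreases at each step, so $V = \emptyset$ is reached after at most $|V(I)|$ basic double links, producing a numerically ACM ideal.

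The main obstacle is the combinatorial bookkeeping showing the sort after the basic double link behaves as claimed, in particular that the new sorted value at position $i \geq i_0$ is $r_{i+1} + a$ and that the standing hypothesis $s_{\nu-1} > r_\nu$ is preserved. Both requirements are met by taking $a$ sufficiently large—specifically, $a$ should exceed both $r_{i_0} - r_\nu$ (so that $d$ is dominated by every other shifted generator) and $r_{i_0} - s_{\nu-1}$ (to preserve the standing hypothesis on the new resolution)—so the verification is essentially mechanical.
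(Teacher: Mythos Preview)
Your argument is essentially sound and establishes the theorem, but the route differs from the paper's and there is one gap worth flagging.

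\textbf{Comparison with the paper.} The paper's Algorithm~\ref{algorithm} chooses the violation with the \emph{smallest} value of $r_i$ (hence the \emph{largest} index in $V(I)$) and performs a basic double link of the precise height $d_1 = r_{i_1} - s_{i_1}$. With this specific height, the new sequences $\underline{r}',\underline{s}'$ (after removing common entries) are literally the old ones shifted by $d_1$ with the $i_1$-th entry deleted from each; equivalently, the new matrix $A$ is the old one with row and column $i_1$ removed. Termination is then immediate: exactly one diagonal negative disappears at each step. Your approach instead takes $i_0=\min V(I)$ and a \emph{large} height $a$, and proves $|V|$ drops via the injection $\phi(i)=i+1$. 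Both terminate in at most $|V(I)|$ steps. The paper's choice buys a canonical output with controlled Hilbert function (the ``numerical Macaulification''), which is then matched to Algorithm~\ref{alg2} and exploited in the Lazarsfeld--Rao section; your choice is conceptually clean for bare existence but produces ideals of much larger degree and gives no control on the resulting Hilbert function.

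\textbf{A gap.} Your sentence ``Since $F$ can be taken as a minimal generator of $I$ of degree $d$'' is not justified: $r_{i_0}$ is an entry of the multiset of \emph{all} odd-position shifts, and need not occur as a shift in $\mathbb{F}_1$. So the cancellation clause of Lemma~\ref{bdl res}(b) may not apply. The fix is painless: choose any $F\in I_d$ (this exists since $d=r_{i_0}\ge r_\nu$, the initial degree), use Lemma~\ref{bdl res}(b) without cancellation to get
\[
\underline{r}'=\{r_j+a\}\cup\{d\},\qquad \underline{s}'=\{s_i+a\}\cup\{d+a\},
\]
and then remove the common entry $d+a=r_{i_0}+a$ from both. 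This yields exactly the $\underline{r}_{\text{new}},\underline{s}_{\text{new}}$ you wrote down, and Lemma~\ref{lem:char-num-aCM}(d) is insensitive to such numerical cancellation because it is equivalent to condition~(b), which depends only on the Hilbert function. With this adjustment your injection argument and the preservation of the standing hypothesis $s_{\nu-1}>r_\nu$ go through as written.
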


To achieve this, we give two algorithms.  Then we will compare the algorithms to see that they result in ideals with the same Hilbert function.

\begin{alg} \label{algorithm}
Let $I \subset R$ be a homogeneous ideal of height $\geq 2$.  Consider a minimal free resolution of $R/I$:
\[
0 \rightarrow \bigoplus_{k=1}^n(-c_{n,k}) \rightarrow \bigoplus_{k=1}^n(-c_{n-1,k}) \rightarrow  \cdots \rightarrow \bigoplus_{k=1}^n(-c_{2,k}) \rightarrow \bigoplus_{k=1}^n(-c_{1,k}) \rightarrow R \rightarrow R/I \rightarrow 0.
\]
\begin{enumerate}
 \item Let
\[
\underline s = \{ s_i \} = \{ c_{p,q} \  | \  p \hbox{ is even} \}, \ \ \ \underline r = \{ r_j \} = \{ c_{p,q} \ | \ p \hbox{ is odd} \},
\]
and assume that the sets $\underline s$ and $\underline r$ are ordered so that the entries are non-increasing.  Note that $\sum s_i = \sum r_j$.

\item Remove equal elements $r_j$ and $s_i$ pairwise one at a time.  For convenience of notation, we will still call the sets $\underline s$ and $\underline r$. So now we may assume that $\underline{s}$ and $\underline{r}$ are disjoint sets.

\item Form the matrix $A = (s_i - r_j)$.  Let $\{ -d_1, \dots, - d_\ell \}$ be the negative entries of $A$ on the main diagonal.  Assume for convenience that they are ordered according to  {\em non-decreasing} values of $r_j$.  (That is, we are taking the negative entries of the main diagonal beginning from the bottom right and moving up and left, regardless of the values of the $d_k$.)

\item {\bf (Main step)}  Say that $-d_1 = s_{i_1} - r_{i_1} <0$ (since it is on the main diagonal).  Using general polynomials, let $J$ be the ideal obtained from $I$ by a basic double link of type $(r_{i_1},d_1)$.

\item Repeat steps (1) -- (4) for $J$.  Continue repeating until there are no longer negative entries on the main diagonal.

\end{enumerate}

\end{alg}

\begin{prop}
  \label{prop:corr-first-alg}
This algorithm terminates, and the result is an ideal  that is numerically ACM.
Thus, we define the resulting ideal to be the {\em numerical Macaulification} of $I$.
\end{prop}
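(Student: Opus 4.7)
The plan is to carry out three tasks: verify well-definedness of each main step; exhibit a strictly decreasing integer invariant so that the algorithm terminates; and apply Lemma \ref{lem:char-num-aCM} to the terminal data to obtain numerical ACM-ness.

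Well-definedness in step (4) reduces to producing an element $F \in I$ of degree $r_{i_1}$. Every element of $\underline{r}$ arises as a shift at some odd homological position of the minimal free resolution of $R/I$, and every such shift is at least the smallest degree of a minimal generator of $I$; hence $I_{r_{i_1}} \neq 0$ and $F$ exists. The generic form $G$ of degree $d_1$ is available because $K$ is infinite, so Lemma \ref{bdl res} applies.

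The heart of the proof is to track how the pair $(\underline{s}, \underline{r})$ transforms under one execution of step (4). By Lemma \ref{bdl res}(b), the basic double link of type $(r_{i_1}, d_1)$ shifts every existing resolution shift by $+d_1$, appends $R(-r_{i_1})$ at homological degree $1$, and appends $R(-r_{i_1}-d_1)$ at homological degree $2$, with a single cancellation of $R(-r_{i_1}-d_1)$ against a shifted copy of $R(-r_{i_1})$ in the event $F$ was chosen to be a minimal generator. The identity $s_{i_1} + d_1 = r_{i_1}$ forces the shifted image of $s_{i_1}$ to meet the newly adjoined $r_{i_1}$, and (when $F$ is not a minimal generator) the newly adjoined $r_{i_1} + d_1$ to meet the shifted image of $r_{i_1}$. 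Combining these coincidences with the shifted images of the pairs cancelled previously in step (2), a direct bookkeeping shows that in both branches the data after step (2) of the next iteration are
\[
\underline{s}_{\text{new}} = \bigl(\underline{s} \setminus \{s_{i_1}\}\bigr) + d_1, \qquad \underline{r}_{\text{new}} = \bigl(\underline{r} \setminus \{r_{i_1}\}\bigr) + d_1.
\]
In particular, the invariant $\nu = |\underline{r}|$ drops by exactly one at each iteration; incidental further coincidences can only make it drop more.

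Termination and correctness then follow. Lemma \ref{bdl res}(a) keeps the codimension equal to two throughout the procedure, so $\nu \geq 2$ at every stage (the Hilbert function attached to $\nu = 1$ is that of a principal ideal, which has codimension one). When $\nu = 2$ the unique diagonal entry equals $s_1 - r_1 = r_2 \geq 1$ and is automatically positive, so the algorithm halts at or before reaching $\nu = 2$; combined with the strict decrease, this gives termination in at most $\nu_0 - 2$ steps. At termination no main-diagonal entry $s_i - r_i$ is negative, which is condition (d) of Lemma \ref{lem:char-num-aCM}; disjointness secured by step (2) together with $s_{\nu-1} \geq r_{\nu-1} \geq r_\nu$ yields the auxiliary hypothesis $s_{\nu-1} > r_\nu \geq 1$. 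The equivalence (d)$\Leftrightarrow$(a) of that lemma then delivers that the terminal ideal has the Hilbert function of a Cohen-Macaulay ideal of codimension two, i.e.\ is numerically ACM. The main obstacle is the uniform case analysis behind the displayed transformation formula: both branches of Lemma \ref{bdl res}(b) must be shown to yield the same post--step-(2) data, so that the one-step decrease of $\nu$ is insensitive to whether $F$ happened to be a minimal generator of $I$.
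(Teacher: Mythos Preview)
Your proof is correct and follows essentially the same route as the paper: both arguments compute the effect of one pass through step (4) on the pair $(\underline{s},\underline{r})$ and find that, after the cancellation in step (2), the new data are obtained from the old by deleting $s_{i_1}$ and $r_{i_1}$ and shifting everything by $d_1$ (equivalently, the new matrix $A$ is the old one with row $i_1$ and column $i_1$ removed), and both then invoke Lemma~\ref{lem:char-num-aCM}. The only noticeable difference is the choice of termination invariant: the paper tracks the number of negative main-diagonal entries, which drops by exactly one per iteration and hence pins down the number of steps as $\ell$; you instead track $\nu=|\underline{r}|$, which also drops but gives only the looser bound $\nu_0-2$. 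Your added care about the existence of $F$ in degree $r_{i_1}$ and about the two branches of Lemma~\ref{bdl res}(b) is welcome and does not appear explicitly in the paper's version.
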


\begin{proof}

Observe first that thanks to Lemma \ref{bdl res}, the Betti numbers (up to one possible cancelation) and hence the Hilbert function of the resulting ideal depend only on the degrees of the polynomials used.  Let $I$ be the original ideal,  and $J$ the result of performing steps (1) to (4). As a result of step (2), associated to $I$ are the sets $\underline s = \{ s_i \}$ and $\underline r = \{ r_j \}$, with no common entries. Then $J$ has a (not necessarily minimal) free resolution with Betti numbers giving new lists
\[
\underline s' = \{ s_i + d_1 \} \ \cup \ \{ r_{i_1} + d_1 \}, \ \ \  \underline r' = \{ r_j +  d_1 \} \ \cup \ \{ r_{i_1} \}.
\]
(Note that $\{s_i + d_1\}$ and $\{r_j + d_1\}$ contain, in general, more than one element, while $\{r_{i_1} + d_1\}$ and $\{ r_{i_1} \}$ are sets with one single element.)  Thus $\underline s'$ contains at least one $  s_{i_1} + d_1 = r_{i_1}$ and at least  one $r_{i_1} + d_1$, and $\underline r'$ contains at least one $r_{i_1} +d_1$ and at least one $r_{i_1}$.  In performing step (2) for $J$, we remove these two entries from both lists.  One checks that as the result of this removal, the new matrix $A$ is obtained from the original one by removing row $i_1$ and column $i_1$.  Thus the new matrix has the same entries on the main diagonal as the original one, except that one negative entry (namely $-d_1$) has been removed.  Thus the algorithm terminates.  But one result of the algorithm are two lists, $\underline s'$ and $\underline r'$, satisfying the conditions of Lemma \ref{lem:char-num-aCM}. (Notice that the construction guarantees that $A$ will have no entries that are equal to 0, in particular on the main diagonal.)  Since the Hilbert function of $R/J$ (where $J$ is the result of the completion of the algorithm) can be computed from $\underline r'$ and $\underline s'$ and seen to be the same as that of the ACM subscheme determined by $\underline r'$ and $\underline s'$, the result follows from Lemma \ref{lem:char-num-aCM}.
\end{proof}

\begin{example} \label{ci33}
Let $I = (w^3,x^3) \cap (y^3,z^3) \subset k[w,x,y,z]$.
This curve has Betti diagram
\begin{verbatim}
                                  0    1    2    3
                          -------------------------
                           0:     1    -    -    -
                           1:     -    -    -    -
                           2:     -    -    -    -
                           3:     -    -    -    -
                           4:     -    -    -    -
                           5:     -    4    -    -
                           6:     -    -    -    -
                           7:     -    -    4    -
                           8:     -    -    -    -
                           9:     -    -    -    1
                          -------------------------
                          Tot:    1    4    4    1
\end{verbatim}
and $h$-vector $(1, 2, 3, 4, 5, 6, 3, 0, -3, -2, -1)$.

The two lists $\underline s$ and $\underline r$ are
\[
\underline s = \{ 9,9,9,9 \}, \ \  \ \ \underline r = \{ 12, 6,6,6,6 \}
\]
so the matrix has the form
\begin{equation} \label{ci33 matrix}
A =
\left [
\begin{array}{ccccccccccccccccc}
\fbox{$-3$} & 3 & 3 & 3 & 3 \\
-3 & 3 & 3 & 3 & 3 \\
-3 & 3 & 3 & 3 & 3 \\
-3 & 3 & 3 & 3 & 3
\end{array}
\right ]
\end{equation}
with only one negative entry in the main diagonal.  Thus we perform only one basic double link, using $\deg G = 3$ and $\deg F = 12$.  The $h$-vector of the resulting ideal, $J$, is computed using Lemma~\ref{bdl res}:
\[
\begin{array}{ccccccccccccccccccccc}
&&&1& 2& 3& 4& 5& 6& 3& 0& -3& -2& -1 \\
1 & 2 & 3 & 3 & 3 & 3 & 3 & 3 & 3 & 3 & 3 & 3 & 2 & 1 \\ \hline
1& 2& 3& 4& 5& 6& 7& 8& 9& 6& 3

\end{array}
\]
and Betti diagram
\begin{verbatim}
                                   0    1    2    3
                           -------------------------
                            0:     1    -    -    -
                            1:     -    -    -    -
                                   ...
                            7:     -    -    -    -
                            8:     -    4    -    -
                            9:     -    -    -    -
                           10:     -    -    4    -
                           11:     -    1    -    -
                           12:     -    -    -    1
                           13:     -    -    1    -
                           -------------------------
                           Tot:    1    5    5    1
\end{verbatim}
\end{example}

\begin{alg} \label{alg2}
Let $I \subset R = K[x_0,\dots,x_n]$ be a homogeneous ideal of height $\geq 2$.  Assume for convenience that $I$ contains no linear forms (otherwise use a smaller $R$).  Let $h_{R/I}$ be the Hilbert function of $R/I$, and consider the $(n-2)$-nd difference
\[
\underline{a} = \Delta^{n-2} h_{R/I} = (1,a_1,\dots,a_e)
\]
where $a_e$ is the last non-zero value and $a_1 \le 2$.
Note that this is a finite sequence, and if $I$ has codimension two then this is the $h$-vector of $R/I$.
Repeat the following step until $\underline{a}$ becomes  is an O-sequence:
\[
(*) \ \ \ \ \ \ \
\begin{array}{lll}
\hbox{\em If $\underline{a}$ is not an $O$-sequence then set $a_{e+1} = 0$ and let $i$ be the smallest index such that } \\
\hbox{\em $a_i \le i$   and  $a_i < a_{i+1}$. Let $F \in I$ be a form of degree $i+2$, and let $J = L \cdot I + (F)$,} \\
\hbox{\em with $L$  a general linear form.  Set $\underline{b}$  to be the $(n-2)$-nd difference of $h_{R/J}$.}
\end{array}
\]
\end{alg}

Note that again this algorithm uses basic double links, but this time always of type $(d,1)$ for different $d$. We illustrate its idea.

\begin{example} \label{motivating example}
Let $C$ be the general union of a line $C_1$, a plane cubic $C_2$, and a curve $C_3$ that is linked to a line in a complete intersection of type $(4,8)$.  The Betti diagram for $R/I_C$ has the form

\begin{verbatim}
                                 0    1    2    3
                         -------------------------
                          0:     1    -    -    -
                          1:     -    -    -    -
                          2:     -    -    -    -
                          3:     -    -    -    -
                          4:     -    -    -    -
                          5:     -    2    1    -
                          6:     -    -    -    -
                          7:     -    2    3    1
                          8:     -    -    -    -
                          9:     -    2    1    -
                         10:     -    1    -    -
                         11:     -    1    5    2
                         12:     -    1    3    2
                         -------------------------
                         Tot:    1    9   13    5
\end{verbatim}

\noindent The two lists $\underline s$ and $\underline r$ are
\[
 \underline s = \{ 14, 14, 14, 13, 13, 13, 13, 13, 11, 9, 9, 9, 7 \}, \ \
\underline r = \{ 15, 15, 14, 14, 13, 12, 11, 10, 10, 10, 8, 8, 6, 6 \}.
\]
Thus, the  $h$-vector of the original curve is
\begin{verbatim}
[1, 2, 3, 4, 5, 6, 5, 5, 3, 4, 2, 0, -3, -2]
\end{verbatim}
Following Algorithm \ref{alg2}, we perform four basic double links, using, successively, forms $F_1,\ldots,F_4$ of degree $\deg F_1 = 10, \deg F_2 = 14+1 = 15, \deg F_3 = 15 + 1 + 1 = 17$, and $\deg F_4 = 15 + 1+1+1 = 18$.  The $h$-vectors of the successive basic double links are
\begin{verbatim}
[1, 2, 3, 4, 5, 6, 7, 6, 6, 4, 4, 2, 0, -3, -2]
-------------------------------
[1, 2, 3, 4, 5, 6, 7, 8, 7, 7, 5, 5, 3, 1, -2, -2]
-------------------------------
[1, 2, 3, 4, 5, 6, 7, 8, 9, 8, 8, 6, 6, 4, 2, -1, -1]
-------------------------------
[1, 2, 3, 4, 5, 6, 7, 8, 9, 10, 9, 9, 7, 7, 5, 3]
\end{verbatim}
whereby we recognize that only the last has the Hilbert function of an ACM curve.  We remark that each of these $h$-vectors is obtained by shifting the previous $h$-vector by one and adding a vector consisting of $(\deg F_i)$ 1's.  For example, the first  $h$-vector above is obtained by
\[
\begin{array}{ccccccccccccccccccccc}
 & 1 & 2 & 3 & 4 & 5 & 6 & 5 & 5 & 3 & 4 & 2 & 0 & -3 & -2 \\
1 & 1 & 1 & 1 & 1 & 1 & 1 & 1 & 1 & 1 \\ \hline
1 & 2 & 3 & 4 & 5 & 6 & 7 & 6 & 6 & 4 & 4 & 2 & 0 & -3 & -2
\end{array}
\]
The effect of this first basic double link is to ``fix" the growth from 3 to 4 (from degree 8 to degree 9) in the original $h$-vector, which violates maximal growth of the Hilbert function. This illustrates the idea of the proof of the next result; indeed, a basic double link using a form of smaller degree would be ``wasted," since it would not serve to change any part of the $h$-vector that fails to be an $O$-sequence, and any basic double link using a form of larger degree would forever eliminate our ability to ``fix" the impossible growth from 3 to 4, since subsequent basic double links use forms of strictly larger degrees. Thus in order to obtain a numerically ACM curve in the fewest possible steps, the first basic double link {\em must} be using a form of degree 10.  The other three basic double links are similarly forced. For purposes of comparing with Algorithm \ref{algorithm}, in Example \ref{2nd half of ex 5.9} we give the Betti table of the numerically ACM curve thus obtained.
\end{example}

\begin{prop}
   \label{prop:corr-second-alg}
Algorithm \ref{alg2} terminates, and the result is an ideal $J'$ that is numerically ACM.  Furthermore, the degrees of the forms $F$ used in the repeated applications of step (*) are strictly increasing.
\end{prop}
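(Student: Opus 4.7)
The strategy is to track how each basic double link in Algorithm \ref{alg2} transforms the $h$-vector $\underline{a}$ of $R/I$, and deduce the strict increase of degrees and the termination from the resulting formula.

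First, by Lemma \ref{bdl res}(c) the basic double link $J = L \cdot I + (F)$ with $\deg L = 1$ and $\deg F = i + 2$ satisfies
\[
  h_{R/J}(t) = h_{R/I}(t-1) + h_{R/(L, F)}(t).
\]
The complete intersection $(L, F)$ has codimension two with $h$-vector $(1, 1, \dots, 1)$ of length $i + 2$, so taking the appropriate difference of this identity yields the new $h$-vector
\[
  b_k = \begin{cases} a_{k-1} + 1 & \text{if } 0 \le k \le i + 1, \\ a_{k-1} & \text{if } k \ge i + 2, \end{cases}
\]
where we set $a_{-1} = 0$.

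For the strict increase of degrees (assertion about the $F$'s), I would show that after the BDL at the smallest bad index $i$ of $\underline{a}$, no index $j \le i$ is a bad index of $\underline{b}$. The case $j = 0$ is immediate from $b_0 = 1 > 0$; and for $1 \le j \le i$, the conditions $b_j \le j$ and $b_j < b_{j+1}$ translate via the formula above to $a_{j-1} \le j - 1$ together with $a_{j-1} < a_j$, which would force $j - 1 < i$ to be a bad index of $\underline{a}$, contradicting the minimality of $i$. Hence any new bad index $i'$ satisfies $i' \ge i + 1$, so the forms $F$ used in successive iterations have strictly increasing degrees $i' + 2 > i + 2$.

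For termination and the numerically ACM conclusion, I would use the monovariant
\[
  T(\underline{a}) := \max\bigl\{T \ge 0 : a_j = j + 1 \text{ for all } 0 \le j \le T\bigr\},
\]
the length of the initial strictly increasing segment of $\underline{a}$. Since any bad index $i$ of $\underline{a}$ satisfies $i > T(\underline{a})$, the formula for $\underline{b}$ gives $b_j = j + 1$ for $0 \le j \le T(\underline{a}) + 1$, and the maximality of $T(\underline{a})$ forces $T(\underline{b}) = T(\underline{a}) + 1$. Each iteration therefore extends the initial ACM-like prefix by one, while simultaneously the chosen excess growth $a_{i+1} - a_i$ is reduced by $1$ in $\underline{b}$ and any negative entry of $\underline{a}$ lying in the absorbed range $[0, i]$ is incremented by $1$. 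Since the initial $\underline{a}$ has finite support with bounded (possibly negative) entries, only finitely many such absorptions can occur before $\underline{a}$ becomes an O-sequence; at that point the loop exits. By Lemma \ref{lem:char-num-aCM} (equivalence of conditions (a) and (b)), the resulting ideal $J'$ is then numerically ACM.

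The main obstacle is making the termination argument rigorous: one must verify that new bad-growth positions introduced by the shift in positions $\ge i + 2$ cannot accumulate faster than existing ones are absorbed. I expect to handle this by bounding the total number of iterations via the finite, nonnegative quantity $\sum_{k} \max(0, -a_k) + \sum_{k \,:\, a_k \le k} \max(0, a_{k+1} - a_k)$, and verifying that this measure strictly decreases at each step.
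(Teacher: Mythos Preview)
Your computation of the transformed $h$-vector and your argument for the strict increase of the degrees are correct and match the paper's reasoning almost verbatim.

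The place where you diverge is the termination argument, and here you have made your life harder than necessary. Your detour through $T(\underline{a})$ is correct as far as it goes, but as you yourself note it cannot close the argument because the length of the $h$-vector grows with each step. The monovariant you then propose,
\[
\sum_{k} \max(0,-a_k)\;+\;\sum_{k:\,a_k\le k}\max(0,a_{k+1}-a_k),
\]
does in fact strictly decrease (one can check this directly from your formula for $b_k$), but the first summand is superfluous. The second summand alone---which the paper calls the \emph{deficit} $\delta$---already drops by exactly one at each iteration. Indeed, from your formula one reads off that $b_{k+1}-b_k = a_k - a_{k-1}$ for $k \neq i+1$, while $b_{i+2}-b_{i+1} = (a_{i+1}-a_i)-1$; since $a_{i+1}-a_i \ge 1$ at the chosen index $i$, the positive part of this single difference drops by one and all other contributions are unchanged. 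This is the entire content of the paper's proof: define $\delta = \sum \max\{0,a_{j+1}-a_j\}$ over the range $j \ge r_\nu$, observe that $\delta = 0$ characterizes numerically ACM, and note that each step of $(*)$ reduces $\delta$ by one.

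So your proposal is not wrong, but it is incomplete (you left the key decrease as ``I expect'') and over-engineered. Drop the first summand and the $T(\underline{a})$ discussion, and simply verify the one-line computation that $\delta(\underline{b}) = \delta(\underline{a}) - 1$; then you have exactly the paper's proof.
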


\begin{proof}
\noindent Observe that if we apply step (*),  $\underline{b}$ is obtained by the computation
\[
\begin{array}{ccccccccccccccccccccccccc}
& 1 & 2 & a_2 & a_3 & \dots & a_i & a_{i+1} & \dots & a_e \\
1 & 1 & 1 & 1 & 1 & \dots & 1 \\ \hline
1 & 2 & 3 & a_2+1 & a_3+1 & \dots & a_i +1 & a_{i+1} & \dots & a_e
\end{array}
\]
By assumption, this is an $O$-sequence up to and including degree $i+1$ (where the value is $b_{i+1} = a_i + 1$), and the possible failure to be an $O$-sequence from (now) degree $i+1$ to degree $i+2$ has decreased by one. If it is still not an $O$-sequence in this degree (because $b_{i+1} = a_i + 1 < a_{i+1} = b_{i+2}$), for the next step we choose $F$ of degree $(i+1)+2 = i+3$, so the degrees of the forms are strictly increasing. If $a_i + 1 = a_{i+1}$ then $\deg F$ increases even more in the next application of step (*).

As long as $a_{i+1} < i+2$, we shall define the {\em deficit in degree $i$} to be $\delta _i = \max \{ 0, a_{i+1} - a_i \}$ and the {\em deficit} to be $\delta = \sum \delta_i$.  An algebra is numerically ACM if and only if its deficit is zero.  Clearly each application of step (*) reduces the deficit by one, hence the result follows.
\end{proof}

We now compare the results of Algorithms \ref{algorithm} and \ref{alg2} when they are applied to the same ideal.

\begin{prop}
  \label{prop:relate-alg}
Let $I$ be a homogeneous ideal that is not Cohen-Macaulay and whose codimension is at least two. Then the numerical Macaulification $J$  produced from $I$ by Algorithm \ref{algorithm} has the same Hilbert function as the ideal $J'$ produced from $I$ by Algorithm \ref{alg2}.  Furthermore, $\sum d_i = \delta$, where $\delta$ is the number of basic double links applied in that algorithm and
\begin{equation*}
  \HH^i (R/J) \cong \HH^i (R/J')
\end{equation*}
whenever $i \le n-2$.
\end{prop}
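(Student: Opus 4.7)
Proof plan.

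My plan is to prove all three conclusions simultaneously by induction on the deficit $\delta$ of $R/I$. The base case $\delta=0$ is immediate, since $I$ is already numerically ACM and both algorithms halt at once, giving $J=J'=I$.

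For the inductive step, let $I_1$ denote the result of the first step of Algorithm~\ref{algorithm} on $I$, a basic double link of type $(r_{i_1}, d_1)$. The central claim is that the deficit of $R/I_1$ equals $\delta - d_1$. By Lemma~\ref{bdl res}(c), the $h$-vector of $R/I_1$ equals the shift-by-$d_1$ of the $h$-vector of $R/I$ plus the $h$-vector of a complete intersection of type $(d_1, r_{i_1})$, namely the unimodal sequence $(1,2,\ldots,d_1,d_1,\ldots,d_1,d_1-1,\ldots,1)$ of length $d_1+r_{i_1}-1$. A direct analysis of the first difference $\Delta h_{R/I_1}$ shows that the $+1$ contributions of $\Delta h^{CI}$ at positions $0,\ldots,d_1-1$ extend the max-growth regime by exactly $d_1$, while the $-1$ contributions at positions $r_{i_1},\ldots,r_{i_1}+d_1-1$ absorb exactly $d_1$ units of the original deficit. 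Algorithm~\ref{algorithm}'s rule of selecting the negative diagonal entry with smallest $r_{i_1}$ first is what guarantees that no new positive jumps are introduced in $\Delta h_{R/I_1}$.

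Granted the central claim, the inductive hypothesis applied to $I_1$ (whose deficit is $\delta - d_1 < \delta$) gives that Algorithms~\ref{algorithm} and~\ref{alg2} applied to $I_1$ produce ideals with the same Hilbert function and with local cohomology shift $\delta - d_1$. Combined with the first step of Algorithm~\ref{algorithm}, we obtain $\sum d_k = d_1 + (\delta - d_1) = \delta$, and by iteration of Lemma~\ref{bdl res}(d) we conclude
\[
\HH^i (R/J) \cong \HH^i(R/I)(-\delta) \cong \HH^i(R/J') \qquad \text{for } i \le n-2.
\]
For the Hilbert-function equality, let $I^{(d_1)}$ denote the ideal obtained after the first $d_1$ steps of Algorithm~\ref{alg2} on $I$. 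Both $I_1$ and $I^{(d_1)}$ have deficit $\delta - d_1$ and local cohomology shift $d_1$; a comparison of their Hilbert functions via Lemma~\ref{bdl res}(c), using the specific form degrees that Algorithm~\ref{alg2} selects in its first $d_1$ steps, shows that the sum of Algorithm~\ref{alg2} corrections equals the complete-intersection correction of Algorithm~\ref{algorithm}, so $h_{R/I_1} = h_{R/I^{(d_1)}}$. Applying Algorithm~\ref{alg2} to either representative, and invoking the inductive hypothesis on $I_1$, then yields $h_{R/J} = h_{R/J'}$.

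The hard part is the verification of the central claim on deficit reduction, together with the parallel identification that the $d_1$ form degrees used by Algorithm~\ref{alg2} in its first $d_1$ steps sum to produce the same $h$-vector correction as the single CI correction of Algorithm~\ref{algorithm}. Both rely on a careful combinatorial analysis of how the $h$-vector transforms under basic double linkage, and on the fact that Algorithm~\ref{alg2}'s greedy selection of degrees is dictated by the evolving structure of the $h$-vector in a manner parallel to Algorithm~\ref{algorithm}'s processing of negative diagonal entries from the bottom-right up.
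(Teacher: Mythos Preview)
Your overall architecture (induction on the deficit $\delta$, with the central claim that one step of Algorithm~\ref{algorithm} drops the deficit by exactly $d_1$) is sound, and that central claim is indeed the technical heart of the paper's own proof. The local cohomology statement and the equality $\sum d_i = \delta$ follow from it just as you indicate.

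The genuine gap is in your Hilbert-function argument. You assert that if $I_1$ is the ideal after the first step of Algorithm~\ref{algorithm} and $I^{(d_1)}$ is the ideal after the first $d_1$ steps of Algorithm~\ref{alg2}, then $h_{R/I_1} = h_{R/I^{(d_1)}}$. This is false in general. A concrete instance appears within Example~\ref{illustrate both approaches}: after both algorithms have performed the three agreeing basic double links of types $(11,1),(12,1),(13,1)$, Algorithm~\ref{algorithm} next performs a single basic double link of type $(19,2)$, while Algorithm~\ref{alg2} performs two basic double links using forms of degrees $18$ and $19$. The additive correction to the (shifted) $h$-vector from the former is the complete-intersection vector $(1,2,2,\ldots,2,1)$ of length $20$; the correction from the latter is $(1,2,2,\ldots,2,0)$ of length $20$. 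These differ in the top degree, and a direct computation gives $h$-vectors ending $(\ldots,1,-4,-2)$ versus $(\ldots,1,-4,-3)$. Thus the intermediate Hilbert functions do \emph{not} match, and your induction cannot proceed by simply identifying $I_1$ with $I^{(d_1)}$ numerically.

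The paper avoids this trap by arguing globally rather than step-by-step: it first writes down an explicit description of the final $h$-vector produced by Algorithm~\ref{alg2} in terms of the original $h$-vector, then shows that each basic double link in Algorithm~\ref{algorithm} reduces the deficit by one in each of $d_k$ consecutive degrees, always beginning with the leftmost degree where a positive jump remains. Since Algorithm~\ref{alg2} also reduces the leftmost remaining jump at each step, the two processes are seen to reach the same terminal $h$-vector even though their intermediate stages diverge. To repair your induction you would need to carry through the hypothesis a finer invariant than the Hilbert function alone --- for instance, the entire sequence of remaining positive jumps together with their positions --- so that the comparison survives the discrepancy at intermediate stages.
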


\begin{proof}
In order to show the claim about the Hilbert function it is enough to prove that the two ideals have the same $(n-2)$-nd differences of their Hilbert functions.  Since all ideals involved except possibly $I$ have codimension two, we will refer to these $(n-2)$-nd differences as $h$-vectors.
Let $\underline s = \{s_1 \ge \dots \ge s_{\nu - 1} \}$  and  $\underline r = \{ r_1 \ge \dots \ge r_{\nu} \}$ be the sequences of integers obtained after applying Steps (1) and (2) of Algorithm \ref{algorithm}. In the proof of Proposition \ref{prop:corr-second-alg} we have seen  that the smallest number of basic double links of type $(t, 1)$ that can be used to obtain a numerically Cohen-Macaulay ideal  starting with $I$ is the deficit
\begin{equation*}
\delta = \sum_{i \ge r_{\nu}} \max \{0, h_{i+1} - h_i \}
\end{equation*}
and that Algorithm \ref{alg2} uses exactly $\delta$ such basic double links. Note that $r_{\nu}$ is the least index $i$ such $h_i \le i$.  Moreover, we know  the $h$-vector of the ideal $J'$ obtained by Algorithm \ref{alg2}. Indeed, denote by $(1,h_1,\ldots,h_e)$ the $h$-vector of $R/I$, and let $m$ be the smallest index $i$ such that $i \ge r_{\nu}$ and $h_i < h_{i+1}$. Then the first entries of the $h$-vector $(1,h'_1,\ldots,h'_{e+\delta})$ of $R/J'$ are given by
\begin{equation*}
  h'_i = \begin{cases}
    i+1 & \mif  i < \delta  \\
    h_{i - \delta} + \delta & \mif  \delta \le i \le m + \delta
  \end{cases}
\end{equation*}
Suppose that the original $h$-vector failed to be an $O$-sequence in another degree. Denote by $m'$ the smallest index $i$ such that $i > m$ and $h_i < h_{i+1}$. Then the next entries of the $h$-vector of $R/J'$ are given by
\begin{equation*}
  h'_i = h_{i-d} + \delta' \quad {\rm whenever} \quad m + \delta < i \le m' + \delta,
\end{equation*}
where
\begin{equation*}
  \delta' = \sum_{i > m} \max \{0, h_{i+1} - h_i \}.
\end{equation*}
Continuing in this fashion we get the $h$-vector of $R/J'$.

We now analyze  Algorithm \ref{algorithm}. To simplify notation, let $k$ be the largest index such that $r_k > s_k$, and set $d = d_1 = r_k - s_k$. Then the main step of Algorithm \ref{algorithm} says that we should perform a basic double link of type $(r_k, d)$. Denote the resulting ideal by $\widetilde{J}$. We want to compare the $h$-vectors of $R/I$ and $R/\widetilde{J}$.

By the choice of $k$, we know that $r_k > s_k \ge s_{k-1} > r_{k-1}$. This implies that whenever $s_k  \le j \le r_k -1$
\begin{equation*}
  \#\{r_i \ | \ r_i \le j \} = \nu - k
\end{equation*}\
and
\begin{equation*}
  \#\{s_i \ | \ s_i \le j \} \ge \nu - k.
\end{equation*}

Hence Lemma \ref{lem:h-vector-computtion} provides
\begin{equation} \label{eq:deficit-I}
  h_{s_k -1} < h_{s_k} < \cdots < h_{r_k - 1}
\end{equation}
and that $m = s_k - 1$ is the smallest index $i$ such that $i \ge r_1$ and $h_i < h_{i+1}$.

Since $d = r_k - s_k < r_k$, a complete intersection of type $(r_k, d)$ has $h$-vector
\begin{equation*}
  (1,2,\ldots,d,d,\ldots,d, d-1,\ldots,2,1),
\end{equation*}
where the last entry is in degree $d+r_k -2$. Hence, by Lemma \ref{bdl res} the $h$-vector $(1,a_1,\ldots,a_s)$ of $R/\widetilde{J}$ satisfies
\begin{equation}
  \label{eq:deficit-J}
a_{r_k - 1 + j} = h_{s_k - 1 + j} + \max \{0, d-j \}
\end{equation}
whenever $j \ge 0$.
This means, in particular, that $h_{s_k -1}$ is increased by $d$, $h_{s_k}$ is increased by $d-1$, \dots, $h_{r_k -2}$ is increased by 1.  Comparing with Equation \eqref{eq:deficit-I},  it follows that the deficit of the original ideal $I$ is decreased, in one step, by a total of $d = d_1 = r_k - s_k$. Repeating the argument we see that the basic double links used in Algorithm \ref{algorithm} reduce the deficit by $\sum d_i$. Since the result of  this algorithm  is numerically Cohen-Macaulay by Proposition \ref{prop:corr-first-alg}, i.e., the deficit for the $h$-vector of $R/J$ is zero, we conclude that
\begin{equation*}
\delta = \sum_{r_i > s_i} (r_i - s_i) = \sum d_i.
\end{equation*}
Applying Lemma \ref{bdl res}, the result about the local cohomology modules follows.

It remains to show that $R/J$ and $R/J'$ have the same $h$-vector.
%By Proposition \ref{prop:corr-second-alg}, the $h$-vector of $R/J'$ is obtained from the one of $R/I$ be reducing the deficit most effectively. We will argue that this is also true for the $h$-vector of $R/J$.
Indeed, we have seen above that the first basic double link used in Algorithm \ref{algorithm} reduces the deficit  by one in each of  $d$ consecutive degrees beginning with the leftmost possible degree. If needed, the second basic double link similarly reduces the deficit beginning with the then leftmost possible degree. Comparing with the above description for obtaining the $h$-vector of $J'$ from the one of $I$, it follows that the numerical Macaulification $J$ computed by Algorithm \ref{algorithm} has the same $h$-vector as $J'$.  This concludes our argument.
\end{proof}

\begin{example}
If we apply Algorithm \ref{alg2} to the curve in Example \ref{ci33}, we make the following computation to get the $h$-vector of the resulting numerically ACM curve:

\[
\begin{array}{ccccccccccccccccc}
&&&1 & 2 & 3 & 4 & 5 & 6 & 3 & 0 & -3 & -2 & -1 \\
% 1 & 2 & 3 & 3 & 3 & 3 & 3 & 3 & 3 & 3 & 3 & 3 &  2 &  1 & \\ \hline
&& 1 & 1 & 1 & 1 & 1 & 1 & 1 & 1 & 1 & 1 \\
& 1 & 1 & 1 & 1 & 1 & 1 & 1 & 1 & 1 & 1 & 1 & 1  \\
1 & 1 & 1 & 1 & 1 & 1 & 1 & 1 & 1 & 1 & 1 & 1 & 1 & 1 \\ \hline
1 & 2 & 3 & 4 & 5 & 6 & 7 & 8 & 9 & 6& 3
\end{array}
\]
which agrees with the one given in Algorithm \ref{algorithm}.

\end{example}

Although the results of Algorithms \ref{algorithm} and \ref{alg2} are numerically equivalent, the numerical Macaulification produced by the former algorithm will typically have fewer minimal generators than the result of the latter algorithm.

%%%%%%%%%%%%%%%%%%%%%%%%%%%%%%%%%%%%%%%%%%%%%%%

\section{The Lazarsfeld-Rao property}

The goal of this section is to understand the role played by the numerically ACM schemes within a fixed even liaison class of codimension two subschemes of $\mathbb P^n$.  It has been shown in a sequence of papers including \cite{LR}, \cite{BBM}, \cite{MDP}, \cite{N-gorliaison} and \cite{nollet2} that any such even liaison class satisfies the so-called {\em Lazarsfeld-Rao property (LR-property)}, recalled below.  In this section we show that within an even liaison class, the subclass of ideals that are numerically ACM itself satisfies a Lazarsfeld-Rao property.  Throughout this section, we consider unmixed codimension two ideals.

We first recall the Lazarsfeld-Rao property, summarized as follows.

\begin{thm}[Lazarsfeld-Rao (LR) Property]
Let $\mathcal L$ be an even liaison class of unmixed codimension two subschemes of $\mathbb P^n$.  Assume that the elements of $\mathcal L$ are not ACM, so that for at least one $i$, $1 \leq i \leq n-2$, we have $M^i := \bigoplus_t H^i(\mathbb P^n, \mathcal I_X(t)) \neq 0$ for all $X \in \mathcal L$.  Then we have a partition $\mathcal L = \mathcal L^0 \cup \mathcal L^1 \cup \mathcal L^2 \cup \dots$, where $\mathcal L^0$ is the set of those $X \in \mathcal L$ for which $M^i$ has the leftmost possible shift, and $\mathcal L^h$ is the set of those elements of $\mathcal L$ for which $M^i$ is shifted $h$ places to the right (this partition does not depend on the choice of $i$).  Furthermore, we have
\begin{itemize}
\item[(a)] If $X_1,X_2 \in \mathcal L^0$ then there is a flat deformation from one to the other through subschemes all in $\mathcal L^0$, which furthermore preserves the Hilbert function.

\item[(b)] Given $X_0 \in \mathcal L^0$ and $X \in \mathcal L^h$ ($h \geq 1$), there exists a sequence of basic double link schemes (see Lemma \ref{bdl res} and Definition \ref{def of bdl}) $X_0,X_1,\dots, X_t$ such that for all $j$, $1 \leq j \leq t$, $X_j$ is a basic double link of $X_{j-1}$, and $X$ is a deformation of $X_t$ through subschemes all in $\mathcal L^h$, all of which furthermore have the same Hilbert function.
\end{itemize}

\end{thm}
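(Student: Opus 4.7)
The plan is to follow the strategy developed in \cite{LR}, \cite{BBM}, \cite{MDP}, and extended in \cite{N-gorliaison}, \cite{nollet2}, all of which rest on Rao's correspondence. The starting point is Rao's theorem: for unmixed codimension two subschemes $X, Y \subset \mathbb P^n$ in the same even liaison class, the intermediate cohomology modules $M^i(X) = \bigoplus_t H^i(\mathbb P^n, \mathcal I_X(t))$, for $1 \le i \le n-2$, are isomorphic up to a single shift that is simultaneously valid for every $i$. Fixing a representative module $M$, I would define the shift $h(X) \in \mathbb Z$ of $X$ relative to $M$; by Serre vanishing there is a minimum shift, and after translating we may declare $\mathcal L^0$ to consist of those $X$ achieving this minimum, with $\mathcal L^h$ the preimage of $h$. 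The simultaneity across $i$ built into Rao's theorem is what makes the partition independent of the choice of $i$.

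For part (a), I would invoke the theory of N-type (or E-type) minimal free resolutions developed by Martin-Deschamps--Perrin in \cite{MDP}: every $X \in \mathcal L^0$ admits a canonical minimal resolution of $I_X$ whose numerical invariants depend only on $M$ (not on $X$), so that the graded ranks are fixed across $\mathcal L^0$. Given $X_1, X_2 \in \mathcal L^0$, I would then assemble a flat family over an affine base whose fibres are the ideals obtained by varying the matrix entries in this common resolution type; the locus on which the complex stays exact and yields a saturated ideal is open and dense, and a general line through the two chosen points supplies a connecting deformation through $\mathcal L^0$ that preserves the Hilbert function.

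For part (b), the essential computational input is Lemma \ref{bdl res}(d): a basic double link of height $a$ shifts every $M^i$ to the right by $a$ without otherwise altering the module. Thus, starting from any $X_0 \in \mathcal L^0$ and performing a sequence of basic double links whose heights sum to $h$, one produces elements $X_1, \dots, X_t$ lying inside $\mathcal L^h$. Conversely, given an arbitrary $X \in \mathcal L^h$, I would use its N-type resolution to exhibit $X$, up to flat deformation within a single Hilbert-scheme stratum of $\mathcal L^h$, as a basic double link of some $X' \in \mathcal L^{h-1}$: the N-type resolution at shift $h \geq 1$ admits a ``peeling'' of a complete-intersection summand that corresponds exactly to a basic double link of positive height, reducing the shift by the height peeled off. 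Iterating this descent downward on $h$ until one reaches $\mathcal L^0$ and then reversing the chain produces the required sequence $X_0, X_1, \ldots, X_t$ together with the terminal deformation of $X_t$ to $X$.

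The main obstacle will be making the descent step in part (b) rigorous: one must verify that for a general enough N-type resolution of $X$ the peeled-off complete-intersection piece is in fact general, so that its reconstruction as a basic double link really does fit into a flat family with $X$, and that this family stays inside $\mathcal L^h$ with the same Hilbert function throughout. This requires an irreducibility statement for the stratum of saturated ideals realizing a given N-type resolution, coupled with a semicontinuity argument for the cohomology modules $M^i$ to guarantee that the shift does not jump away from $h$ along the family. The remaining ingredients---independence of the partition from the choice of $i$, existence of the minimum shift, and the ascending half of (b)---are comparatively routine once Rao's correspondence and Lemma \ref{bdl res}(d) are in hand.
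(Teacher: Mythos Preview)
The paper does not prove this theorem. It is stated as a known background result, introduced with ``We first recall the Lazarsfeld-Rao property, summarized as follows,'' and attributed to the sequence of papers \cite{LR}, \cite{BBM}, \cite{MDP}, \cite{N-gorliaison}, \cite{nollet2}. There is therefore no proof in the paper to compare your proposal against.

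That said, your outline is a fair high-level summary of the strategy in those references: Rao's correspondence to set up the shift partition, the $N$-type/$E$-type resolution machinery of Martin-Deschamps--Perrin to control the minimal stratum and to perform the descent, and the basic double link shift formula (your use of Lemma~\ref{bdl res}(d)) for the ascent. Your identification of the delicate point---the irreducibility of the stratum with fixed $N$-type resolution and the semicontinuity needed to keep the family inside $\mathcal L^h$---is accurate; this is exactly where the cited papers (especially \cite{BBM}, \cite{MDP}, and later \cite{BM4}) do the real work. If you want a self-contained treatment along the lines you sketch, those are the sources to flesh out, not the present paper.
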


We begin by recalling some technical results that will be important in this section.  They were not originally formulated in this generality, but the same proofs work.

\begin{lem}[\cite{BM4}, Proposition 3.1] \label{def lemma 1}
Let $\mathcal L$ be an even liaison class of codimension two subschemes of $\mathbb P^n$, and let $X,Y \in \mathcal L^h$ be elements such that $X$ and $Y$ have the same Hilbert function.  Then there exists an irreducible flat family $\{X_s\}_{s \in S}$ of codimension two subschemes of $\mathbb P^n$ to which both $X$ and $Y$ below.  Moreover, $S$ can be chosen so that for all $s \in S$, $X_s \in \mathcal L^h$ and $X_s$ has the same Hilbert function as $X$ and $Y$.
\end{lem}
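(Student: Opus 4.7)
The plan is to build the family $\{X_s\}_{s\in S}$ by stringing together flat families through three irreducible stages, each staying inside $\mathcal L^h$ and each preserving the Hilbert function. The three stages correspond to (i) a deformation from $X$ (resp.\ $Y$) down to a terminal scheme $X_t$ (resp.\ $Y_{t'}$) of an iterated basic double link tower originating from a minimal element, as furnished by part (b) of the LR property; (ii) a flat connection of the corresponding minimal elements $X_0, Y_0 \in \mathcal L^0$ by an irreducible family $\{Z_u\}_{u\in U}$ via part (a) of the LR property; and (iii) a relative basic double link construction lifting stage (ii) through the towers.

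The workhorse is stage (iii). Each individual basic double link of Lemma \ref{bdl res} depends on a pair $(G,F)$ of general polynomials of prescribed degrees, which ranges over an irreducible quasi-projective parameter space; relativizing this over $\{Z_u\}_{u\in U}$ gives, at each BDL step, an irreducible parameter space of pairs, and parts (c)--(e) of Lemma \ref{bdl res} guarantee that the resulting ideal sheaves form a flat family with constant Hilbert function (depending only on the Hilbert function of the base fiber and on the type $(d,a)$) and lie in a single shift class determined by the cohomology isomorphism of part (d). Iterating this construction along a common sequence of types produces an irreducible flat family in $\mathcal L^h$ of constant Hilbert function whose fibers include both $X_t$ and $Y_{t'}$; concatenating with the deformations of stage (i) then delivers the desired $\{X_s\}_{s \in S}$.

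The main obstacle is to arrange that the two towers of stage (i) can be taken to use the same sequence of BDL types, so that the relative construction over $\{Z_u\}_{u\in U}$ terminates at both $X_t$ and $Y_{t'}$ simultaneously. Here I would exploit Lemma \ref{bdl res}(c): along any tower from a fixed minimal element, the Hilbert function of each intermediate scheme is determined by the starting Hilbert function together with the sequence of types $(d_i,a_i)$ used so far, and the total shift satisfies $\sum a_i = h$. Since $X_0$ and $Y_0$ share a Hilbert function (two minimal elements of the same liaison class have the same Hilbert function), and since $X_t$ and $Y_{t'}$ share a Hilbert function (that of $X$ and $Y$), an induction on $h$ --- or, more directly, on the number of BDL steps --- reduces to the single-step case, and there a combinatorial bookkeeping argument on the type contributions to the Hilbert function lets one replace the two towers by a single common sequence of types whose effect on the Hilbert function matches. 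Once this alignment is achieved, the relative construction of stage (iii) runs over a single irreducible base and produces the required family.
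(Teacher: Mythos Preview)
The paper does not prove this lemma; it is quoted from \cite{BM4} (their Proposition~3.1) without argument, so there is no in-paper proof to compare against directly.

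Your outline has a genuine gap at the final step. Concatenating three irreducible families --- the stage~(i) deformation from $X$ to $X_t$, the stage~(iii) relative-BDL family, and the stage~(i) deformation from $Y_{t'}$ to $Y$ --- yields only a \emph{connected} family, not an irreducible one, and the lemma demands a single irreducible base $S$. To repair this you would need $X$ and $Y$ themselves (not merely $X_t$ and $Y_{t'}$) to occur as fibers of the stage~(iii) construction; but part (b) of the LR property only asserts that $X$ deforms to $X_t$ within $\mathcal L^h$, not that $X$ is itself a basic double link built from some minimal element with some choice of $(F,G)$. A secondary issue is that your ``combinatorial bookkeeping'' alignment of the two BDL towers to a common sequence of types is exactly the normal-form statement recorded here as Lemma~\ref{main conclusion}, not a light step; and in \cite{BM4} the present proposition is established \emph{before} the LR property and is one of the ingredients in its proof, so your order of dependence is inverted relative to the source.

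The argument actually used in \cite{BM4} is more direct and sidesteps both problems. Every $X \in \mathcal L^h$ sits in a short exact sequence $0 \to \mathcal E \to \mathcal F \to \mathcal I_X \to 0$ in which the sheaf $\mathcal E$ depends only on the even liaison class and the shift $h$, while the free sheaf $\mathcal F$ is then pinned down by the Hilbert function of $X$. Thus $X$ and $Y$, lying in the same $\mathcal L^h$ with the same Hilbert function, share the same $\mathcal E$ and $\mathcal F$; the desired irreducible family is the open subset of the affine space $\operatorname{Hom}(\mathcal E,\mathcal F)$ over which the cokernel is the ideal sheaf of a codimension-two subscheme.
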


We remark that the conclusion that $X_s \in \mathcal L^h$ is a very strong one: it means that all the elements of the flat family are in the same even liaison class, and their modules have the same shift.

\begin{lem}[\cite{BM4} Corollary 3.4] \label{def lemma 2}
If $X,Y$ are codimension two subschemes of $\mathbb P^n$, both in $\mathcal L^h$, and if the general hyperplane section of $X$ has the same Hilbert function as the general hyperplane section of $Y$, then $X$ and $Y$ belong to the same flat family, with the properties described in Lemma \ref{def lemma 1}.
\end{lem}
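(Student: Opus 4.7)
The plan is to reduce Lemma \ref{def lemma 2} to Lemma \ref{def lemma 1} by showing that the hypotheses already force $X$ and $Y$ to have identical Hilbert functions as subschemes of $\mathbb P^n$. Once that is established, Lemma \ref{def lemma 1} directly supplies the flat family with the stated properties, and nothing further is needed.

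Two ingredients will drive the reduction. The first is that membership in a single stratum $\mathcal L^h$ is precisely the condition ensuring that the deficiency modules $M^i := \bigoplus_t H^i(\mathbb P^n, \mathcal I_X(t))$, for $1 \le i \le n-2$, of $X$ and $Y$ are isomorphic as graded $R$-modules with \emph{no} relative shift. The second is the hyperplane-section short exact sequence: for a general linear form $L$, which is a nonzerodivisor on $R/I_X$ since $X$ is unmixed of codimension two, one has
\[
0 \to \mathcal I_X(-1) \xrightarrow{\;\cdot L\;} \mathcal I_X \to \mathcal I_{X \cap H \mid H} \to 0,
\]
and the associated long exact cohomology sequence in each degree $t$ expresses the first difference $h_{R/I_X}(t) - h_{R/I_X}(t-1)$ as the value $h_{R_H/I_{X \cap H}}(t)$ of the hyperplane-section Hilbert function plus a correction term that is a prescribed function of the graded pieces $[M^i]_{t}$ and $[M^i]_{t-1}$ together with the multiplication-by-$L$ maps between them.

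Applying the same analysis to $Y$, with a second general linear form, one concludes $\Delta h_{R/I_X}(t) = \Delta h_{R/I_Y}(t)$ for every $t$: the hyperplane section Hilbert functions agree by hypothesis, and the correction terms depend only on the $M^i$, which agree up to isomorphism. Since both Hilbert functions vanish in sufficiently negative degree, this yields $h_{R/I_X} = h_{R/I_Y}$, and Lemma \ref{def lemma 1} finishes the argument.

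The main obstacle is verifying that the correction term in the first-difference formula is genuinely a function of the abstract isomorphism class of the deficiency modules equipped with their $R$-action, not of any finer structure that could differ between $X$ and $Y$. Concretely, the correction involves dimensions of kernels and cokernels of multiplication-by-$L$ on the $M^i$, and one must argue that generality of $L$ suffices to make those dimensions depend only on the modules up to isomorphism. This is also where the hypothesis $X, Y \in \mathcal L^h$ is used in full strength: any relative shift between the deficiency modules would alter the correction terms degree-by-degree and the argument would collapse.
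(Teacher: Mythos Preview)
The paper does not give its own proof of this lemma; it simply cites it as Corollary~3.4 of \cite{BM4}. Your outline is the standard reduction and is correct: once $h_{R/I_X}=h_{R/I_Y}$ is established, Lemma~\ref{def lemma 1} applies verbatim.

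Two small sharpenings. First, only $M^1$ enters the correction, not all the $M^i$. Since $I_X$ is saturated and $L$ is a nonzerodivisor on $R/I_X$, the short exact sequence $0\to (R/I_X)(-1)\to R/I_X\to R/(I_X,L)\to 0$ gives $\Delta h_{R/I_X}(t)=h_{R/(I_X,L)}(t)$, and the long exact sequence in local cohomology yields
\[
H^0_{\mathfrak m}\bigl(R/(I_X,L)\bigr)\;\cong\;\ker\bigl(\cdot L:\ H^1_{\mathfrak m}(R/I_X)(-1)\to H^1_{\mathfrak m}(R/I_X)\bigr),
\]
so that
\[
\Delta h_{R/I_X}(t)\;=\;h_{R_H/I_{X\cap H}}(t)\;+\;\dim_K\ker\bigl(\cdot L:\,[M^1]_{t-1}\to [M^1]_t\bigr).
\]
Second, your ``main obstacle'' dissolves once this is written down: for $X,Y\in\mathcal L^h$ one has $M^1(X)\cong M^1(Y)$ as graded $R$-modules, so multiplication by a general $L$ has kernels of identical dimensions in each degree; no appeal to deeper structure is needed. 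The equality $\Delta h_{R/I_X}=\Delta h_{R/I_Y}$ follows, and summing from $-\infty$ gives $h_{R/I_X}=h_{R/I_Y}$.
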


\begin{lem}[\cite{BM4}, Corollary 3.9 (b)] \label{bm4 result 1}
Let $I$ be an ideal (not necessarily of codimension two).  Then the ideal obtained from $I$ by a basic double link of type $(d,a)$ is numerically equivalent to the ideal obtained from $I$ by a sequence of $a$ basic double links of type $(d,1)$.  In particular, these ideals have the same Hilbert function.
\end{lem}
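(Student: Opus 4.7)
The plan is to prove numerical equivalence directly from the structure of the free resolutions produced by basic double linkage, and then to read off the equality of Hilbert functions from the definition of numerical equivalence.

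First, I would apply Lemma \ref{bdl res}(b) once to a fixed (not necessarily minimal) free resolution $0 \to \mathbb F_n \to \cdots \to \mathbb F_1 \to I \to 0$. This immediately shows that the basic double link $J$ of type $(d,a)$ has a free resolution in which $\mathbb F_1$ is replaced by $\mathbb F_1(-a) \oplus R(-d)$, $\mathbb F_2$ is replaced by $\mathbb F_2(-a) \oplus R(-d-a)$, and every higher $\mathbb F_i$ is replaced by $\mathbb F_i(-a)$.

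Next, I would iterate Lemma \ref{bdl res}(b) to track the effect of performing $a$ basic double links of type $(d,1)$ in succession, at each step choosing a form of degree $d$ in the current ideal (for instance, the same $F$ used at the first step, which lies in every subsequent ideal because each successive $J_k$ contains the previous one up to a principal factor together with $(F)$). A straightforward induction on $k$ shows that after $k$ such steps the first two free modules of the resulting resolution are
\[
\mathbb F_1(-k) \oplus \bigoplus_{j=0}^{k-1} R(-d-j) \quad \text{and} \quad \mathbb F_2(-k) \oplus \bigoplus_{j=1}^{k} R(-d-j),
\]
while the higher terms are $\mathbb F_i(-k)$. Specializing to $k = a$ yields the desired resolution of the iterated ideal $J'$.

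Finally, I would compare the two Betti diagrams. The intermediate summands $R(-d-j)$ for $j = 1, \ldots, a-1$ appear in both the first and the second free modules of the iterated construction, so by the definition of the numerical reduction they cancel pairwise. After these cancellations, what remains in each diagram is the shifted resolution of $I$ together with a single $R(-d)$ in position $1$ and a single $R(-d-a)$ in position $2$. Hence the two Betti diagrams admit the same numerical reduction, so $J$ and $J'$ are numerically equivalent; in particular they share the same Hilbert function. The only place requiring genuine care is the inductive bookkeeping that produces the telescoping list of intermediate summands, and verifying that the degree $d$ element $F$ persists through all iterations so that the type $(d,1)$ basic double links are legitimately performed; the rest is a formal computation.
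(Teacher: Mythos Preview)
Your argument is correct. The paper itself does not prove this lemma at all; it is simply quoted from \cite{BM4}, Corollary 3.9(b), with no argument supplied. So there is no ``paper's own proof'' to compare against, and your self-contained derivation from Lemma~\ref{bdl res}(b) and the definition of numerical reduction is a legitimate substitute.

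Two small remarks on the execution. First, Lemma~\ref{bdl res}(b) is stated for a \emph{minimal} free resolution of the input ideal, whereas in your iteration you feed in the (possibly non-minimal) resolution of $J_{k-1}$ produced at the previous step. This is harmless---the mapping-cone construction behind Lemma~\ref{bdl res}(b) works verbatim for any finite free resolution---but it is worth saying explicitly, since otherwise a careful reader will wonder whether you are allowed to iterate. Second, your telescoping computation is exactly right: after $a$ steps the extra summands in homological degrees $1$ and $2$ are $\bigoplus_{j=0}^{a-1} R(-d-j)$ and $\bigoplus_{j=1}^{a} R(-d-j)$, and the overlapping terms $R(-d-j)$ for $1\le j\le a-1$ cancel under numerical reduction, leaving precisely $\mathbb F_1(-a)\oplus R(-d)$ and $\mathbb F_2(-a)\oplus R(-d-a)$, which matches the single type-$(d,a)$ link. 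Since the two Betti diagrams become literally identical after these cancellations, any further reductions proceed identically, so numerical equivalence (and hence equality of Hilbert functions) follows.

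Your observation that the original $F$ persists in every $J_k$ (because $J_k = L_k \cdot J_{k-1} + (F)$) is the right way to see that the sequence of type-$(d,1)$ links is legitimate.
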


\begin{lem}[\cite{BM4}, in proof of Lemma 5.2] \label{bm4 result 2}
Let $I$ be an ideal.  Let $a_0 = \min \{   t \ | \ [I]_t \neq 0 \}$.  Let $a,b \geq a_0$ be integers.  Assume that  $b \neq a_0$.   Let $J_1$ be the ideal obtained from $I$ by a sequence of two basic double links, first of type $(a,1)$ and then of type $(b,1)$ (assume that this is possible).  Then it is also possible to do a sequence of basic double links of types $(b-1,1)$ and $(a+1,1)$, resulting in an ideal $J_2$ that is numerically equivalent to $J_1$.
\end{lem}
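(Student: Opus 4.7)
The plan is to apply Lemma~\ref{bdl res}(b) twice in each of the two sequences (using the mapping-cone construction iteratively, even when the intermediate resolution is non-minimal) and to observe that the resulting free resolutions of $J_1$ and $J_2$ are equal as graded $R$-modules in every homological position. Numerical equivalence will then follow from the fact that numerical reduction is insensitive to trivial (matching) cancellations.

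First I would verify that the second sequence of basic double links is actually possible. Since $b \ge a_0$ and $b \neq a_0$ we have $b-1 \ge a_0$, so $[I]_{b-1} \neq 0$ (any nonzero element of $[I]_{a_0}$ multiplied by a suitable form yields a nonzero element in each higher degree, as $R$ is a domain). Writing $K = G' I + (F')$ with $F' \in [I]_{b-1}$ of degree $b-1$ and $G'$ a general linear form, the inclusion $G' \cdot [I]_a \subseteq [K]_{a+1}$, together with the fact that $[I]_a \neq 0$ (required for the original $(a,1)$-link to have been possible), gives $[K]_{a+1} \neq 0$. Hence the $(a+1,1)$-link on $K$ is possible, and $J_2$ is defined.

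Next, starting from a minimal free resolution $0 \to \mathbb{F}_n \to \cdots \to \mathbb{F}_1 \to I \to 0$, I would compute the (possibly non-minimal) free resolution of $J_1$ by two applications of Lemma~\ref{bdl res}(b): the $(a,1)$-link adds $R(-a)$ in position $1$ and $R(-a-1)$ in position $2$ to $\mathbb{F}_{\bullet}(-1)$, and then the $(b,1)$-link shifts everything by $-1$ and adds $R(-b)$ in position $1$ and $R(-b-1)$ in position $2$. The resulting resolution has $\mathbb{F}_i(-2)$ in position $i \ge 3$, has $\mathbb{F}_2(-2) \oplus R(-a-2) \oplus R(-b-1)$ in position $2$, and has $\mathbb{F}_1(-2) \oplus R(-a-1) \oplus R(-b)$ in position $1$. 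The analogous computation for $J_2$ (first $(b-1,1)$, then $(a+1,1)$) produces $\mathbb{F}_i(-2)$ in position $i \ge 3$, $\mathbb{F}_2(-2) \oplus R(-b-1) \oplus R(-a-2)$ in position $2$, and $\mathbb{F}_1(-2) \oplus R(-b) \oplus R(-a-1)$ in position $1$. The two free resolutions coincide summand-for-summand.

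Finally, these identical non-minimal resolutions give identical Betti-like diagrams $D$. The actual minimal Betti diagrams of $J_1$ and $J_2$ are obtained from $D$ by removing matching cancellation pairs (at most one per application of Lemma~\ref{bdl res}(b), depending on whether the chosen form was a minimal generator of the relevant intermediate ideal). Since numerical reduction cancels all such pairs in an order-independent fashion, the numerical reductions of $D$ and of the two minimal Betti diagrams all agree, so $J_1$ and $J_2$ are numerically equivalent. The main subtlety — the closest thing to an obstacle in this argument — is precisely this last invariance: the cancellations that pass from $D$ to the minimal resolutions of $J_1$ and $J_2$ may genuinely differ between the two sequences depending on which intermediate $F$'s are minimal generators, but each such difference is a matching pair and hence invisible to numerical reduction.
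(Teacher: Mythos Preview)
The paper does not give its own proof of this lemma; it is simply cited from \cite{BM4}. Your argument is correct and is the natural one: two iterated applications of Lemma~\ref{bdl res}(b) show that $J_1$ and $J_2$ admit identical (possibly non-minimal) free resolutions
\[
0 \to \mathbb F_n(-2) \to \cdots \to \mathbb F_2(-2)\oplus R(-a-2)\oplus R(-b-1) \to \mathbb F_1(-2)\oplus R(-a-1)\oplus R(-b) \to J_k \to 0,
\]
from which numerical equivalence follows. Your feasibility check for the $(b-1,1),(a+1,1)$ sequence is fine. One small technical remark: in the second application of Lemma~\ref{bdl res}(b) in each chain you are feeding in a possibly non-minimal resolution, whereas the lemma as stated begins with a minimal one; but the underlying mapping-cone construction works verbatim for any free resolution, so this causes no difficulty and your handling of it in the final paragraph is correct.
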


\begin{lem}[\cite{BM4}, Section 5] \label{main conclusion}
Let $\mathcal L$ be an even liaison class of codimension two subschemes.  Let $X_0$ be an arbitrary minimal element of $\mathcal L$.  Let $X \in \mathcal L$ be an arbitrary element.  Assume that $X \in \mathcal L^h$.  Let $a$ be the initial degree of $I_{X_0}$.  Then associated to $X$ is a uniquely determined sequence of integers $(b,g_2,g_3,\dots,g_r)$ such that
\begin{enumerate}
\item $b \geq 0$;
\item $a < g_2 < \dots < g_r$;
\item $b+r-1 = h$;
\item $X$ is numerically equivalent to the scheme obtained from $X_0$ by a sequence of basic double links of types
\[
(a,b), (g_2,1), (g_3,1), \dots, (g_r,1).
\]
\end{enumerate}
\end{lem}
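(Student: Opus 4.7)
The plan is to combine the LR property with Lemmas \ref{bm4 result 1} and \ref{bm4 result 2} to put any basic-double-link chain from $X_0$ to $X$ into a canonical form, and then to read off uniqueness from the Hilbert function. By part (b) of the LR property, there exist basic double links $X_0, X_1, \ldots, X_t$ such that $X$ is a deformation of $X_t$; in particular $X$ is numerically equivalent to $X_t$, and if the $j$th link has type $(d_j, a_j)$ then $\sum_j a_j = h$ since each link contributes $a_j$ to the shift of $M^i$. Using Lemma \ref{bm4 result 1}, each link of type $(d_j, a_j)$ is replaced by $a_j$ consecutive links of type $(d_j, 1)$, yielding a longer chain, of total length $h$, that is still numerically equivalent to the original.

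The next step is to normalize the resulting height-one chain $d_1, d_2, \ldots, d_h$ using Lemma \ref{bm4 result 2}, which allows any consecutive pair $(p, 1), (q, 1)$ with $q$ strictly larger than the current initial degree to be replaced by $(q-1, 1), (p+1, 1)$. The target form is $a, a, \ldots, a, g_2, g_3, \ldots, g_r$, with $b$ copies of $a$ at the front and a strictly increasing tail $a < g_2 < \cdots < g_r$. Whenever a pair $p, q$ past the leading block satisfies $p \geq q$ and $q > a$, one application of the swap replaces it with $q-1, p+1$, which is lexicographically smaller. The lexicographic order on $(d_1, \ldots, d_h)$ is thus a monovariant that strictly decreases under each admissible swap, and since each $d_i$ is bounded below by $a$ the procedure terminates. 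A short case analysis then shows that once no admissible swap remains the sequence must be in the canonical form. Collapsing the leading block of $b$ links of type $(a, 1)$ into one link of type $(a, b)$ via Lemma \ref{bm4 result 1} in reverse produces the advertised chain. Properties (1)--(4) are immediate: $b \geq 0$ counts these leading links, the strict inequalities $a < g_2 < \cdots < g_r$ are built into the normal form, and $b + (r-1) = h$ follows since total height is preserved throughout.

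For uniqueness I plan to recover $(b, g_2, \ldots, g_r)$ directly from the $h$-vectors of $X_0$ and $X$. By Lemma \ref{bdl res}(c), a height-one link of type $(g, 1)$ transforms the current $h$-vector by shifting it one step to the right and adding the $h$-vector of the complete intersection of type $(g, 1)$, which is a block of $g$ ones in consecutive degrees; similarly the initial link of type $(a, b)$ adds the $h$-vector of the complete intersection of type $(a, b)$. Given the $h$-vectors of $X_0$ and $X$, a greedy reconstruction analogous to the argument in Proposition \ref{prop:corr-second-alg} recovers these increments uniquely: $b$ is determined by how much the initial stretch of the $h$-vector is lifted, then $g_r$ by the last nontrivial increment, then $g_{r-1}$ by peeling off the next block, and so on downward. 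The main obstacle I foresee is the combinatorial verification of the normalization in the second step --- specifically, ensuring the hypothesis $q > a$ of Lemma \ref{bm4 result 2} continues to hold for every admissible swap as the intermediate initial degrees evolve, and checking carefully that the sequences produced by successive swaps cannot stall short of the canonical form.
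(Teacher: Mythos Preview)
The paper does not give its own proof of this lemma; it is stated with the citation \cite{BM4}, Section 5, and used as a black box. So there is nothing in the paper to compare your argument against directly. That said, your reconstruction is exactly in the spirit of \cite{BM4}: reduce to height-one links via Lemma~\ref{bm4 result 1}, then normalize using the swap move of Lemma~\ref{bm4 result 2}, and read off uniqueness from the Hilbert function. This is the intended route, and the paper imports Lemmas~\ref{bm4 result 1} and~\ref{bm4 result 2} precisely so that arguments of this shape can be made later (e.g., in the proof of Theorem~\ref{LR for num ACM}).

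Two comments on points you flagged. First, your worry about the hypothesis of Lemma~\ref{bm4 result 2} is the right one, but it resolves cleanly once you track initial degrees: after a height-one link of type $(d,1)$ the initial degree either stays the same (if $d$ equals the current initial degree) or increases by exactly one, so along a chain $d_1,\ldots,d_h$ the initial degree before position $k$ is $a$ plus the number of $j<k$ with $d_j$ strictly larger than the then-current initial degree. In particular it never exceeds $d_k$, and the swap condition $q\neq a_0$ reduces to $q>a_0$, which you can maintain inductively since each $d_i\geq a$ throughout and a swap $(p,q)\mapsto(q-1,p+1)$ with $q>a_0\geq a$ keeps both entries $\geq a$. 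Second, for uniqueness your greedy reconstruction from the $h$-vector works, but it is cleaner to argue via the numerically reduced $\underline r,\underline s$ data (as in Lemma~\ref{lem:char-num-aCM}): each height-one link of type $(g,1)$ with $g$ strictly above the current initial degree contributes a new pair $(g,g+1)$ to $(\underline r,\underline s)$ after shifting, and the strictly increasing condition on the $g_i$ ensures these contributions are separated, so the $g_i$ and then $b$ are determined by the Hilbert function of $X$ together with that of $X_0$.
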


Lemma \ref{main conclusion} implies that up to numerical equivalence (i.e. in this case, up to flat deformation preserving the cohomology of the ideal sheaf, and in particular preserving the Hilbert function), we only have to consider sequences of basic double links as in (4), i.e. beginning with a minimal element and satisfying conditions (1), (2), (3).

The following observation is a consequence of Lemma \ref{bdl res}.

\begin{lem} \label{num acm preserved}
If $I$ is numerically ACM and $J$ is the result of applying any basic double link to $I$, then $J$ is again numerically ACM.
\end{lem}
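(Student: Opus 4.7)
The plan is to reduce to a direct application of Lemma \ref{bdl res}(c) by transferring the basic double link from $I$ to a genuine codimension two ACM ideal that shares the Hilbert function of $I$.

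First, since $I$ is numerically ACM, by definition there is an ACM codimension two ideal $I' \subset R$ with $h_{R/I'} = h_{R/I}$. Suppose $J$ is the basic double link of $I$ of type $(d,a)$, obtained using forms $F \in I$ of degree $d$ and a general $G \in R$ of degree $a$. I would then choose an element $F' \in I'$ of the same degree $d$ (possible because $I$ and $I'$ have the same Hilbert function, so $[I']_d \neq 0$ whenever $[I]_d \neq 0$) and a general form $G' \in R$ of degree $a$, and form the basic double link $J' = G' \cdot I' + (F')$.

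Next, I would verify that $J'$ is actually ACM of codimension two. Since $I'$ is ACM of codimension two, its minimal free resolution has length two. By Lemma \ref{bdl res}(b), the resulting resolution of $J'$ also has length two, and by Lemma \ref{bdl res}(a), $J'$ has codimension two. Hence $R/J'$ has projective dimension two equal to its codimension, so by Auslander--Buchsbaum it is Cohen--Macaulay, making $J'$ an ACM codimension two ideal. (Alternatively, this is immediate from Lemma \ref{bdl res}(d): $\HH^i(R/J') \cong \HH^i(R/I')(-a) = 0$ for $i \le n-2$, and $J'$ has codimension two.)

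Finally, I would match Hilbert functions via Lemma \ref{bdl res}(c). With $\mathfrak{a} = (F,G)$ and $\mathfrak{a}' = (F',G')$, both complete intersections of type $(d,a)$ have the same Hilbert function, so
\begin{equation*}
h_{R/J}(t) = h_{R/I}(t-a) + h_{R/\mathfrak{a}}(t) = h_{R/I'}(t-a) + h_{R/\mathfrak{a}'}(t) = h_{R/J'}(t).
\end{equation*}
Thus $J$ has the Hilbert function of the ACM codimension two ideal $J'$, which is precisely the statement that $J$ is numerically ACM. There is no serious obstacle here; the only subtlety to make sure of is that the generic choice of $G$ implicit in the definition of basic double link is compatible on both sides, which is harmless since the Hilbert function formula in Lemma \ref{bdl res}(c) depends only on the degrees $d$ and $a$, not on the particular generic forms chosen.
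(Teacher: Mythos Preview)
Your proof is correct and is precisely the argument the paper has in mind: the paper gives no detailed proof, stating only that the lemma is ``a consequence of Lemma \ref{bdl res}.'' Your transfer of the basic double link to a genuine ACM codimension two ideal $I'$ with the same Hilbert function, together with the Hilbert-function formula in Lemma \ref{bdl res}(c), is exactly the natural way to unpack that remark.
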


The next result gives a class of curves, all of which fail to be  numerically ACM.

\begin{lem} \label{min elts not NACM}
Let $\mathcal L$ be an even liaison class of curves in $\mathbb P^3$.  Then the minimal elements of $\mathcal L$ are not numerically ACM.
\end{lem}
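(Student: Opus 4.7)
The plan is to argue by contradiction. The ACM class is vacuous for the lemma (every ACM curve is numerically ACM), so assume $\mathcal L$ is non-ACM. Suppose a minimal element $X \in \mathcal L$ is numerically ACM; then by Lemma \ref{lem:char-num-aCM} its $h$-vector $\Delta^2 h_{R/I_X}$ is a nonnegative $O$-sequence, and I would then exhibit a negative entry of this $h$-vector for a contradiction.

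First, I would establish a key identity. Take a general linear form $L \in R_1$, set $H = V(L)$, $R' = R/(L)$, and $Z = X \cap H \subset H \cong \mathbb P^2$; then $L$ is a nonzerodivisor on $R/I_X$ (since $I_X$ is saturated) and $Z$ is a $0$-dimensional ACM subscheme of $\mathbb P^2$. Let $M = \bigoplus_t H^1(\mathbb P^3, \mathcal I_X(t)) \neq 0$ be the Hartshorne-Rao module of $X$. Applying local cohomology to
\[
0 \to (R/I_X)(-1) \xrightarrow{L} R/I_X \to R/(I_X, L) \to 0
\]
and comparing $R/(I_X, L)$ with its saturation $R'/I_Z$ yields the identity
\[
\Delta^2 h_{R/I_X}(t) = \Delta h_{R'/I_Z}(t) + \Delta k(t),
\]
where $k(t) = \dim_K \ker(L : M_{t-1} \to M_t)$. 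Setting $\beta = \max\{t : M_t \neq 0\}$, one computes $k(\beta+1) = \dim M_\beta \geq 1$ and $k(\beta+2) = 0$, so $\Delta k(\beta+2) = -\dim M_\beta < 0$. If $X$ is numerically ACM, the nonnegativity of $\Delta^2 h_{R/I_X}(\beta+2)$ forces $\Delta h_{R'/I_Z}(\beta+2) \geq \dim M_\beta \geq 1$; that is, the regularity $\sigma$ of the $0$-dimensional ACM scheme $Z$ in $\mathbb P^2$ satisfies $\sigma \geq \beta + 2$.

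The main step, and the principal obstacle, is to derive a contradiction from $\sigma \geq \beta + 2$ using the minimality of $X$. The plan is to construct a basic double linkage predecessor $X' \in \mathcal L$ of $X$ whose Hartshorne-Rao module has top degree $\beta - 1$, contradicting $X \in \mathcal L^0$. The bound $\sigma \geq \beta + 2$ provides enough ``room'' in the general hyperplane section to isolate a complete intersection component, which can be lifted via the reverse direction of Lemma \ref{bdl res}(f) to produce $X'$. Making this precise is the hard part; it would likely invoke either the structural theory of minimal curves due to Martin-Deschamps and Perrin, or a direct verification via the minimal free resolution of $I_X$ showing that for a minimal curve in a non-ACM class, the top shift in $F_3$ strictly exceeds the top shift in $F_2$, whence $r_1 > s_1$ in Lemma \ref{lem:char-num-aCM}(d), so $X$ cannot be numerically ACM.
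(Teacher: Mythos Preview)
Your hyperplane-section identity $\Delta^2 h_{R/I_X}(t) = \Delta h_{R'/I_Z}(t) + \Delta k(t)$ is correct, and the conclusion that numerical ACM-ness would force the $h$-vector of $Z$ to be positive in degree $\beta+2$ is valid. But this is a detour: by itself it does not contradict minimality, and you acknowledge as much. The two completions you sketch are of very different quality.

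Route (a) --- ``split off a complete intersection in the hyperplane section and lift it to a BDL predecessor'' --- is not a workable plan. Basic double linkage has no general inverse: knowing that $Z$ is ``large'' in a numerical sense does not produce a decomposition $I_X = L \cdot I_{X'} + (F)$. Nothing in Lemma \ref{bdl res}(f) runs backwards. So this route is a genuine gap.

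Route (b) is correct and is exactly what the paper does, but you only state the conclusion (``top shift in $F_3$ strictly exceeds the top shift in $F_2$'') without giving the reason, and the reason is the whole proof. The paper's argument is short and direct: by Rao and Martin-Deschamps--Perrin, for a minimal curve $C$ the last two free modules $\mathbb F_4, \mathbb F_3$ in the minimal resolution
\[
0 \to \mathbb F_4 \to \mathbb F_3 \to \mathbb F \to I_C \to 0
\]
coincide with the last two free modules in the minimal resolution of the Hartshorne-Rao module $M(C)$. Since $M(C)$ is a nonzero finite-length (hence Cohen--Macaulay) module, the maximal shift strictly increases along its minimal resolution, giving $\max\{c_i\} > \max\{b_i\}$; and since $I_C$ has height two, $\max\{b_i\} > \max\{a_i\}$. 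Hence $r_1 = \max\{c_i\} > \max\{b_i\} = s_1$, so the $(1,1)$ entry of the matrix $A$ is negative and condition (d) of Lemma \ref{lem:char-num-aCM} fails. No contradiction argument, no hyperplane section, is needed. Your opening computation can be dropped entirely; the proof is the single paragraph you gestured at in your last sentence, once you supply the MDP/Rao input that makes it go.
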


\begin{proof}
Let $C$ be a minimal curve.  We know that all minimal curves have the same minimal free resolution.  In fact, let $M(C)$ denote the Hartshorne-Rao module of $C$.  Combining a result of Rao (\cite{rao} Theorem 2.5) with a result of Martin-Deschamps and Perrin (\cite{MDP} Proposition 4.4), we see that there are free modules $\mathbb F_4$, $\mathbb F_3$, $\mathbb F_2$, $\mathbb F_1$, $\mathbb F_0$ and $\mathbb F$ fitting into the following minimal free resolutions:
\[
\begin{array}{c}
\displaystyle 0 \rightarrow \mathbb F_4 \rightarrow \mathbb F_3 \rightarrow \mathbb F_2 \rightarrow \mathbb F_1 \rightarrow F_0 \rightarrow M(C) \rightarrow 0 \\ \\
\displaystyle 0 \rightarrow \mathbb F_4 \rightarrow \mathbb F_3 \rightarrow \mathbb F \rightarrow I_C \rightarrow 0
\end{array}
\]
(See also \cite{N-gorliaison}.)
Let
\[
\mathbb F_4 = \bigoplus R(-c_i), \ \ \ \mathbb F_3 = \bigoplus R(-b_i) \ \ \ \mathbb F = \bigoplus R(-a_i).
\]
Since $M(C)$ is Cohen-Macaulay, we have $\max \{ c_i \} > \max \{ b_i \}$.  As  $I_C$ is a height two ideal, we have $\max \{ b_i \} > \max \{ a_i \}$. It follows that in the matrix $A$ constructed in Algorithm \ref{algorithm}, the $(1,1)$ entry of the matrix $A$ is $<0$. Thus $C$ is not numerically ACM.  \end{proof}

We believe that Lemma \ref{min elts not NACM} continues to hold for equidimensional codimension two subschemes, but we do not have a proof.

We will prove a Lazarsfeld-Rao structure theorem for the numerically ACM elements of an even liaison class $\mathcal L$.  We illustrate the idea with an example.

\begin{thm} \label{LR for num ACM}
Let $\mathcal L$ be a codimension two even liaison class of subschemes of $\mathbb P^n$.  Let $\mathcal M$ be the subset of $\mathcal L$ consisting of numerically ACM subschemes.  Then $\mathcal M$ also satisfies the LR property.  That is, there are minimal elements of $\mathcal M$, unique up to flat deformation preserving the cohomology of the ideal sheaf (hence also the Hilbert function), and every element of $\mathcal M$ can be produced from a minimal one by a sequence of basic double links followed by a  flat deformation preserving the cohomology. 

Furthermore, the numerical Macaulification of any minimal element in  $\mathcal L$ is a minimal element in  $\mathcal M$. 
\end{thm}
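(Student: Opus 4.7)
First, fix a minimal element $X_0$ of $\mathcal L$ and let $\delta$ be the deficit of its $h$-vector, as defined in the proof of Proposition \ref{prop:corr-second-alg}. Applying Algorithm \ref{alg2} to $X_0$ produces an ideal $Y_0$ via $\delta$ basic double links of type $(d,1)$; each such BDL has height one and hence shifts local cohomology by one (Lemma \ref{bdl res}(d)), so $Y_0 \in \mathcal L^\delta \cap \mathcal M$. Conversely, for any $Z \in \mathcal M \cap \mathcal L^h$, Lemma \ref{main conclusion} combined with Lemma \ref{bm4 result 1} expresses $Z$ (up to numerical equivalence) as a sequence of $h$ BDLs of type $(d_i,1)$ starting from $X_0$; the proof of Proposition \ref{prop:corr-second-alg} shows that each such BDL decreases the deficit by at most one, so $h \ge \delta$. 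Thus $\mathcal L^\delta$ is the leftmost shift supporting elements of $\mathcal M$, which already yields the ``furthermore'' statement: the numerical Macaulification $Y_0$ of $X_0$ lives in the minimum shift of $\mathcal M$.

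Next, I would show that every element of $\mathcal M \cap \mathcal L^\delta$ has the same Hilbert function as $Y_0$. Any such $Z$ arises (up to numerical equivalence) from exactly $\delta$ BDLs of type $(d_i,1)$ from $X_0$, each of which must reduce the deficit by exactly one. Using the explicit update rule from Lemma \ref{bdl res}(c) — a $(d,1)$-BDL shifts the $h$-vector right by one and adds one in positions $0,\ldots,d-1$ — together with the Hilbert function identification of Proposition \ref{prop:relate-alg}, an induction on $\delta$ shows that every such deficit-optimal sequence terminates with the same final $h$-vector. With common Hilbert function established, Lemma \ref{def lemma 1} places any two minimal elements of $\mathcal M$ in a common flat family preserving cohomology.

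For the full LR property within $\mathcal M$, let $Z \in \mathcal M \cap \mathcal L^h$ with $h > \delta$. Realize $Z$ as a sequence of $h$ BDLs of type $(d_i,1)$ from $X_0$ as above, and iterate Lemma \ref{bm4 result 2} to rearrange the sequence so that the first $\delta$ BDLs form a valid numerical Macaulification of $X_0$. The intermediate ideal produced at that stage lies in $\mathcal M \cap \mathcal L^\delta$ and, by the uniqueness established above, has the same Hilbert function as $Y_0$. The remaining $h - \delta$ BDLs then carry an ideal numerically equivalent to $Y_0$ to one numerically equivalent to $Z$, and a final application of Lemma \ref{def lemma 1} lifts this chain to an actual flat deformation preserving cohomology, completing the LR structure for $\mathcal M$.

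The principal obstacle lies in the rearrangement step of the third paragraph: Lemma \ref{bm4 result 2} furnishes only the local swap $(a,1),(b,1) \leftrightarrow (b-1,1),(a+1,1)$, whose degrees change under the move, so one cannot arbitrarily permute. A secondary difficulty is that a generic BDL of type $(d,1)$ need not decrease — and may even increase — the deficit, so the rearrangement must be carried out while maintaining valid intermediate $h$-vectors. I would attack both by strong induction on $h - \delta$, using the $h$-vector uniqueness of the second paragraph as a rigidity constraint that forces the intermediate configuration to coincide with $Y_0$'s Hilbert function and thereby pins down the admissible swap sequences.
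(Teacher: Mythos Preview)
Your outline follows essentially the same strategy as the paper: produce a minimal element of $\mathcal M$ via Algorithm~\ref{alg2}, show $\delta$ is the minimum shift, establish uniqueness there, and then rearrange an arbitrary sequence so that its first $\delta$ steps are the Macaulification.  A few points of comparison and one correction:

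\textbf{Uniqueness (your second paragraph).}  The paper does not argue by induction on~$\delta$.  Instead it uses the normal form from Lemma~\ref{main conclusion} more sharply: after expanding via Lemma~\ref{bm4 result 1}, the degrees $d_i$ of the BDLs are \emph{non-decreasing} (indeed strictly increasing after an initial block of minimal degree).  Under that constraint two elementary observations pin down the sequence: if at some step you choose $\deg F$ strictly larger than the degree prescribed by step~$(*)$ of Algorithm~\ref{alg2}, then the deficit in that degree can never again be repaired by later (larger-degree) BDLs; if you choose $\deg F$ smaller, the deficit does not drop.  Hence a deficit-optimal sequence in non-decreasing degrees is \emph{forced degree by degree}, which immediately gives both the minimality of~$\delta$ and the uniqueness of the Hilbert function at shift~$\delta$.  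Your induction is plausible but vaguer; exploiting the monotonicity of the degrees is what makes the argument clean.

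\textbf{Rearrangement (your third and fourth paragraphs).}  The paper's mechanism is exactly what you propose: classify each BDL in the given sequence as deficit-reducing or not (there are exactly $\delta$ of the former), then repeatedly apply Lemma~\ref{bm4 result 2} to the adjacent pair just before the first out-of-place deficit-reducing BDL, bubbling it leftward.  The paper does not carry out a formal induction on $h-\delta$; it simply observes that each swap moves one deficit-reducing step one position earlier and iterates.  Your concern about ``maintaining valid intermediate $h$-vectors'' is not something the paper addresses, because it does not need to: the claim is only about the \emph{final} Hilbert function, and Lemma~\ref{bm4 result 2} guarantees numerical equivalence of the end result regardless of what the intermediate $h$-vectors look like.

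\textbf{A small error.}  Your statement that a BDL of type $(d,1)$ ``may even increase'' the deficit is incorrect.  From the update rule, the first differences of the new $h$-vector coincide with those of the old one (shifted) except in a single degree, where the difference drops by~$1$.  Hence the deficit either stays the same or decreases by exactly one; it never increases.  This actually simplifies your accounting.
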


\begin{proof}
In this proof we will heavily use the fact that $\mathcal L$ has the LR property.  We also use the fact that without loss of generality we can assume that our sequence of basic double links is performed with $\deg G = 1$ and $\deg F_i$ {\em strictly increasing. }  We also use the fact that if $C$ is a minimal element with $h$-vector $(1,2,h_2,\dots,h_k)$ (with $h_i$ not necessarily non-negative) and a basic double link of type $(t,1)$ is performed, then the $h$-vector of the resulting curve is obtained by shifting that of $C$ by one, and adding a vector of $(\deg F)$ ones as in the previous example. Finally, recall that $C$ is numerically ACM if and only if its $h$-vector is an $O$-sequence.

First we produce the minimal elements of $\mathcal M$ using Algorithm \ref{alg2}.
Let $C_0$ be a minimal element of $\mathcal L$, and let $\underline{h} = (1,2, h_2,h_3,\dots,h_e)$ be its $h$-vector (recall that  minimal elements of $\mathcal L$ all have the same $h$-vector). If all the minimal elements of $\mathcal L$ are numerically ACM then it follows from Lemma \ref{num acm preserved} and the LR-property for $\mathcal L$ that  $\mathcal M = \mathcal L$, hence $\mathcal M$ has the LR-property.  (In the case of curves in $\mathbb P^3$, we have seen in Lemma \ref{min elts not NACM} that minimal elements are never numerically ACM.)

So assume that the minimal elements are not numerically ACM.  By Lemma \ref{min elts not NACM}, $\underline{h}$ is not an $O$-sequence. Thus there is a least degree $i$ such that $h_i < h_{i+1}$ (possibly $h_i < 0$ as well).  Each application of  the  step $(*)$ of Algorithm \ref{alg2} reduces the deficit by 1 in the leftmost possible degree, and we ultimately obtain an $O$-sequence, hence a numerically ACM subscheme.  This subscheme has deficiency modules (which are not all zero) shifted $\delta$ places to the right of that of $C_0$, where $\delta$ is the number of basic double links applied.

To prove the LR-property for $\mathcal M$, we must show that

\begin{itemize}
\item[(a)] no sequence of fewer than $\delta$ basic double links (always with $\deg G = 1$) achieves a numerically ACM subscheme;

\item[(b)] the (strictly increasing) degrees of the forms obtaining a numerically ACM subscheme in $\delta$ steps is uniquely determined;

\item[(c)] any sequence of basic double links using forms of strictly increasing degrees and ending with a numerically ACM subscheme is numerically equivalent to another sequence whose first $\delta$ steps are the ones from above (but now the remaining steps are not necessarily using forms of strictly increasing degree).
\end{itemize}

Parts (a) and (b) give us that the minimal shift of $\mathcal M$ can be obtained using this procedure, and the elements lie in a flat family since they have the same Hilbert function and cohomology. Part (c) says that any element of $\mathcal M$ can be obtained from a minimal element using basic double linkage. We do not claim that any element of $\mathcal M$ can be obtained from a minimal element using basic double links of strictly increasing degree!

We make two observations at this point.  First, if instead of applying step $(*)$ of Algorithm \ref{alg2}, we make a choice of $F$ of degree $> i+2$, it  results in a situation where no subsequent sequence of basic double links using forms of strictly increasing degree can be numerically ACM, since there will always be a step in the $h$-vector where the deficit is positive.  Second, if we choose $F$ to be of degree $< i+2$, the number of steps remaining to obtain a numerically ACM subscheme does not drop.  These observations together imply (a) and (b).

For (c), suppose that $C$ is a numerically ACM subscheme in $\mathcal L$ that is obtained from a minimal element by some sequence of basic double links using forms of strictly increasing degree (after possibly a number of basic double links using forms of least degree), and then deforming. Let $\delta$ be the deficit of the minimal element.   By the above observations, there are precisely $\delta$ of the basic double links that reduce the deficit by one; the others do not change the deficit.  Consider the first deficit-reducing basic double link. If it is actually the first basic double link, leave it alone and consider the second deficit-reducing basic double link.  If all of the deficit-reducing basic double links come at the beginning of the sequence of basic double links, there is nothing to prove.  Otherwise, suppose that the $i$-th deficit-reducing basic double link is the first one that is not at the $i$-th step, and suppose that it comes at the $(i+k)$-th step. Taking the $(i+k-1)$-st and $(i+k)$-th basic double links, we can apply Lemma \ref{bm4 result 2}, resulting in a new sequence where the first $(i+k-2)$ basic double links are unchanged, and the $(i+k-1)$-st one is deficit-reducing.  Continuing in this way, we produce a new sequence where the first $i$ basic double links are all deficit-reducing.  Then we move to the next deficit-reducing basic double link, and in this way ultimately we produce the desired result of all $\delta$ basic double links coming at the beginning of the sequence (but possibly losing the property of being strictly increasing).
\end{proof}

%%%%%%%%%%%%%%%%%%%%%%%%%%%%%%%%%%%%%%%%%%%%%%%

\section{A class of smooth numerically ACM curves} \label{smooth}

In this section we give a class of smooth numerically ACM space curves that contains Harris's example as a special case.  We will find such an example in the liaison class $\mathbb L_n$ consisting of curves whose Hartshorne-Rao module is $K^n$, i.e. is $n$-dimensional, concentrated in one degree.  The example of Joe Harris is the case $n=1$.  It will first be necessary to recall some facts.  Except as indicated, all of these results can be found in (or deduced from) \cite{BM1}.

Our main tool is a result from \cite{BM1} that is stated in the language of {\em numerical characters}.  The numerical character of a curve was introduced by Gruson and Peskine \cite{GP}.  Rather than give the definition, we give an equivalent formulation from \cite{GM3} that is more useful to us in our setting.

Let $C$ be a curve in $\mathbb P^3$ and let $H$ be a general hyperplane.  Let $\underline{h}$ be the $h$-vector of the hyperplane section $Z = C \cap H$.  Suppose that $\sigma$ is the initial degree of $I_Z$ in the coordinate ring of $H = \mathbb P^2$.  Then $\underline{h}$ reaches a maximum value of $\sigma$ in degree $\sigma-1$ (and possibly beyond).  The numerical character of $C$ is a $\sigma$-tuple of integers defined as follows: First, if $\Delta \underline{h}$ takes a negative value $-k$ in degree $t$ then there are $k$ occurrences of $t$ in the numerical character. These account for all the entries of the numerical character.  However, we list the entries of the numerical character in non-increasing order.  For example, if $Z$ has $h$-vector
\[
(1,2,3,4,5,6,7,8,8,6,5,5,3,2,1)
\]
then $\sigma = 8$ and the numerical character is $(15, 14, 13, 12, 12, 10, 9,9)$. Notice that there is a gap (there is no 11), corresponding to the fact that 5 occurs twice in the $h$-vector.

Since $K^n$ is self-dual, we may view $\mathbb L_n$ as an even liaison class.  The minimal elements of $\mathbb L_n$ form a flat family (thanks to the LR-property) and the general element is smooth.  All minimal elements have degree $2n^2$ and arithmetic genus $\frac{1}{3} (2n-3)(2n-1)(2n+1)$.  The Hartshorne-Rao module of such a curve occurs in degree $2n-2$. The ideal is generated by forms of degree $2n$, and there are $3n+1$ such generators.  The $h$-vector of a minimal curve is
\begin{equation} \label{h-vector of min}
(1,2,3,4,\dots,2n-1,2n,-n).
\end{equation}
Since the Hartshorne-Rao module occurs in degree $2n-2$ and the initial degree of the ideal is $2n$, any minimal curve has maximal rank.

The $h$-vector of the general hyperplane section of such a curve is
\begin{equation} \label{h-vector of hyperplane sect}
(1,2,3,4,\dots,2n-2,2n-1,n);
\end{equation}
in particular, the initial degree is $\sigma = 2n-1$ and there are $n$ generators of that degree.  The numerical character of the curve is
\[
( \underbrace{2n,\dots,2n}_{n},\underbrace{2n-1,\dots,2n-1}_{n-1} ).
\]

Given a curve $C$ and its numerical character $(n_0,n_1,\dots,n_{\sigma-1})$ with $n_0 \geq n_1 \geq \dots \geq n_{\sigma-1}$, we define for any integer $i$
\[
A_i = \# \{ j | n_j = i \}.
\]
One of the main results of \cite{BM1} is the following:

\begin{thm}[\cite{BM1}, Theorem 5.3] \label{BM1 result}
Let $N = (n_0,n_1,\dots,n_{\sigma-1})$ be a sequence of integers without gaps satisfying
\begin{itemize}
\item $n_0 \geq n_1 \geq \dots \geq n_{\sigma-1} \geq \sigma$;

\item $\sigma \geq 2n-1$;

\item $A_\sigma \geq n-1$;

\item $A_{\sigma+1} \geq n$, and if $A_{\sigma+1} = n$ then  $A_t = 0$ for all $t > \sigma+1$.
\end{itemize}
Then there exists a smooth maximal rank curve $C \in \mathbb L_n$ with numerical character $N$.
\end{thm}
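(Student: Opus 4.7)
The strategy is to exploit the Lazarsfeld-Rao structure of $\mathbb L_n$ together with the known shape of its minimal elements. A generic minimal curve $C_0 \in \mathbb L_n$ is smooth and has numerical character exactly $N_0 = (\underbrace{2n,\dots,2n}_{n},\underbrace{2n-1,\dots,2n-1}_{n-1})$. By Lemma \ref{main conclusion}, every curve in $\mathbb L_n$ is obtained from such a $C_0$, up to the flat deformation described by Lemma \ref{def lemma 1}, through a sequence of basic double links whose first step has type $(2n, b)$ (equivalent by Lemma \ref{bm4 result 1} to $b$ successive BDLs of type $(2n, 1)$) and whose remaining steps have type $(g_i, 1)$ with $2n < g_2 < \cdots < g_r$. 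Thus constructing a curve with prescribed numerical character $N$ reduces, up to smoothing, to the combinatorial problem of matching $N$ against the output of such a BDL sequence applied to $N_0$.

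The first step I would carry out is to compute how a single BDL of type $(d, 1)$ with a general linear form acts on the numerical character of the curve. A direct calculation using Lemma \ref{bdl res}(c), specialized to a general hyperplane section, shows that the new $h$-vector of the hyperplane section is obtained from the old one by a shift of one followed by adding a vector of $d$ ones; passing to the first difference, this simply shifts every entry of the numerical character up by one and appends a new entry equal to $d$ (the list being then re-sorted). Iterating, the admissible BDL sequences of Lemma \ref{main conclusion} put into an explicit bijection the integer data $(b; g_2, \dots, g_r)$ and the numerical characters reachable from $N_0$.

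The next step is to verify that the four hypotheses on $N$ are precisely what is needed to realize $N$ in this combinatorial form. The bounds $\sigma \geq 2n - 1$ and $n_{\sigma-1} \geq \sigma$ together ensure that $N$ sits above an appropriate shift of the skeleton $N_0$; the inequalities $A_\sigma \geq n-1$ and $A_{\sigma+1} \geq n$ account exactly for the $n-1$ copies of $2n-1$ and $n$ copies of $2n$ in $N_0$ after the shift; and the no-gaps condition together with the boundary refinement when $A_{\sigma+1} = n$ translates into the existence of a valid strictly increasing tail $g_2 < \cdots < g_r$. Once the BDL sequence is in hand, Lemma \ref{def lemma 1} embeds the (highly reducible) output in a flat family of curves all in $\mathbb L_n^h$ with the same Hilbert function, to whose general member I would pass. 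A Bertini-type argument in the Hilbert scheme, following \cite{BM1}, produces a smooth irreducible general member; and maximal rank is essentially automatic because the Hartshorne-Rao module of any curve in $\mathbb L_n$ is concentrated in degree $2n-2$, strictly below the initial degree $2n$ of the ideal.

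The main obstacle is the smoothing step. The BDL output always has a complete-intersection component, so smoothness cannot be preserved step-by-step and must instead be recovered only at the end by deforming within the Hilbert scheme to a general member of the stratum cut out by the fixed numerical character and cohomological shift. This requires an irreducibility or smooth-general-member statement for this stratum, which is the technical heart of the argument and where the hypotheses on $N$ are used essentially (beyond their combinatorial role above). The combinatorial matching of $N$ to a BDL sequence is, by comparison, routine bookkeeping.
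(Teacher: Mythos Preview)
The paper does not prove this theorem. It is quoted from \cite{BM1} (Bolondi--Migliore, \emph{Math.\ Ann.}\ 1987) as one of the main results of that paper, and is used in Section~\ref{smooth} as a black box to construct smooth numerically ACM curves in $\mathbb L_n$. There is therefore no proof in the present paper to compare your proposal against.

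That said, your outline is a plausible sketch of the strategy actually carried out in \cite{BM1}: start from a smooth minimal element of $\mathbb L_n$, reach any admissible numerical character by a suitable sequence of basic double links (the combinatorial translation between the data $(b;g_2,\dots,g_r)$ of Lemma~\ref{main conclusion} and the numerical character being the bookkeeping you describe), and then smooth the resulting reducible curve inside the appropriate stratum of $\mathcal L^h$. Your identification of the smoothing step as the genuine obstacle is correct; this is where \cite{BM1} does real work, and your proposal does not supply that argument beyond invoking ``a Bertini-type argument.''

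One inaccuracy worth flagging: your justification of maximal rank (``the Hartshorne-Rao module \dots\ is concentrated in degree $2n-2$, strictly below the initial degree $2n$ of the ideal'') refers to the \emph{minimal} curve, not to the curve $C$ obtained after the BDLs and deformation. Under a basic double link of type $(d,1)$ the module shifts by one (Lemma~\ref{bdl res}(d)) and the initial degree becomes $\min(d,a_0+1)$; you need to track both along the entire sequence and verify that the module degree stays strictly below the initial degree at the end. This does hold for the sequences arising from the hypotheses on $N$, but it is not automatic in the way you state.
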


Another result proved there, which will be useful to us, is the following. Note that it was shown much earlier by Gruson and Peskine \cite{GP} that the general hyperplane section of an integral curve in $\mathbb P^3$ has a numerical character that is without gaps.  This turns out to be equivalent to the observation by Joe Harris \cite{space curves} that the $h$-vector of the general hyperplane section of an irreducible curve in $\mathbb P^3$ is of {\em decreasing type}.

\begin{thm}[\cite{BM1}, Corollary 3.6, Notation 3.7] \label{BM1 result2}
Let $Y \in \mathbb L_n$ be a smooth maximal rank curve, and let $N(Y) = (n_0,n_1,\dots,n_{\sigma-1})$ be its numerical character. Then the sufficient conditions listed in Theorem \ref{BM1 result} are also necessary.

\end{thm}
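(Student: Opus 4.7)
The plan is to extract each of the four conditions, together with the ``no gaps'' requirement, from a combination of (a) the Gruson--Peskine theorem applied to the smooth, hence integral, curve $Y$; (b) the hyperplane-section cohomology sequence for $Y$; and (c) the hypothesis that $M(Y) \cong K^n$ is concentrated in a single degree together with maximal rank. First, the Gruson--Peskine theorem (recalled in the paragraph preceding Theorem \ref{BM1 result2}) immediately gives that the numerical character $N(Y)$ has no gaps, since the $h$-vector of the general hyperplane section $Z = Y \cap H$ is of decreasing type. The first bullet of Theorem \ref{BM1 result}, namely $n_{\sigma-1} \geq \sigma$, is likewise immediate from the definition of $\sigma$ as the initial degree of $I_Z$ in $\mathbb{P}^2$: the $h$-vector of $Z$ increases strictly up to the value $\sigma$ in degree $\sigma-1$, so the first negative value of $\Delta \underline{h}$ can only occur in a degree $\geq \sigma$.

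For the remaining three conditions, I would analyze the hyperplane-section sequence
\begin{equation*}
0 \rightarrow \mathcal{I}_Y(t-1) \rightarrow \mathcal{I}_Y(t) \rightarrow \mathcal{I}_Z(t) \rightarrow 0,
\end{equation*}
whose associated long exact sequence of cohomology, combined with the maximal rank hypothesis on $Y$, expresses $\dim H^0(\mathcal{I}_Z(t))$ in terms of $\dim H^0(\mathcal{I}_Y(t))$ and $\dim M(Y)_{t-1}$. Since $M(Y) \cong K^n$ is concentrated in one degree $p$, the only deviation of the $h$-vector of $Z$ from that of an ACM hyperplane section occurs in degrees $p+1$ and $p+2$. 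Using the LR-shift description of $\mathbb{L}_n$ together with the explicit $h$-vector of a minimal element displayed in equation \eqref{h-vector of hyperplane sect}, I would identify $\sigma = p+1$ and read off $\sigma \geq 2n-1$, $A_\sigma \geq n-1$, and $A_{\sigma+1} \geq n$. The second half of condition (iv)---that the equality $A_{\sigma+1} = n$ forces $A_t = 0$ for all $t > \sigma+1$---amounts to the statement that once the cohomological drops predicted by $\dim M(Y) = n$ are entirely absorbed in degrees $\sigma$ and $\sigma+1$, the $h$-vector of $Z$ must be strictly decreasing from that point on, a conclusion made rigorous by re-invoking $N(Y)$ having no gaps.

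The main obstacle I anticipate is the careful bookkeeping in the second step: translating the structural assumptions ``$M(Y)$ is $K^n$ in one degree $p$'' and ``$Y$ has maximal rank'' into the exact numerical identities for $A_\sigma$ and $A_{\sigma+1}$. This requires pinning down the interplay between the Betti numbers of a (possibly non-minimal) free resolution of $I_Y$, the resulting $h$-vector of $Z$, and the precise shift of $M(Y)$ dictated by the Lazarsfeld--Rao partition of $\mathbb{L}_n$. Once these numerical identifications are made, each of the four necessity conditions reduces to a direct comparison with the definition of the $A_i$ from $\Delta \underline{h}$, matching precisely the sufficient conditions of Theorem \ref{BM1 result} and completing the converse direction.
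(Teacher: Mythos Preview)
The paper does not prove this statement: Theorem~\ref{BM1 result2} is quoted verbatim as Corollary~3.6 and Notation~3.7 of \cite{BM1}, with no argument reproduced here. So there is no in-paper proof to compare your proposal against; the authors simply import the result from the earlier Bolondi--Migliore paper and use it as a black box in the construction that follows.

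That said, your outline is broadly the right shape for how such a result is established in \cite{BM1}: the ``no gaps'' and $n_{\sigma-1}\ge\sigma$ conditions do come from Gruson--Peskine/decreasing type, and the remaining inequalities are indeed extracted from the hyperplane-section exact sequence combined with the maximal rank hypothesis and the one-dimensional support of $M(Y)$. One point to be careful about: your identification ``$\sigma=p+1$'' holds for the minimal element but is not automatic for an arbitrary maximal rank $Y\in\mathbb L_n$; what one actually shows is $\sigma\ge p+1$, and the inequalities $A_\sigma\ge n-1$, $A_{\sigma+1}\ge n$ then follow from comparing the drop in the $h$-vector of $Z$ forced by the $n$-dimensional contribution of $H^1$ in the single degree $p$. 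The clean bookkeeping for this, and in particular the borderline case $A_{\sigma+1}=n$, is exactly what \cite{BM1} carries out, so if you want the details you should consult that reference rather than the present paper.
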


We first produce a minimal numerically ACM curve in $\mathbb L_n$ (which will not be smoothable in $\mathbb L_n$).  According to the algorithm in Theorem \ref{LR for num ACM}, since we begin with the $h$-vector (\ref{h-vector of min}) for the minimal curve, we must perform a series of $n$ basic double links, using forms in the ideal of degrees $2n+2, 2n+3,\dots,3n+1$.  The resulting curve, $Y$, is numerically ACM.  Its Hartshorne-Rao module occurs in degree $(2n-2)+n = 3n-2$ and the initial degree of $I_Y$ is $(2n) + n = 3n$.  Thus $Y$ also has maximal rank.

To compute the $h$-vector of the general hyperplane section of $Y$, we begin with the $h$-vector of the general hyperplane section of the minimal curve in $\mathbb L_n$, namely (\ref{h-vector of hyperplane sect}), and perform the same sequence of basic double link calculations using forms of degrees $2n+2, 2n+3,\dots,3n+1$.  We obtain that the general hyperplane section of $Y$ has $h$-vector
\[
(1,2,3,\dots, 3n-2, 3n-1,2n,n)
\]
where the $2n$ occurs in degree $3n-1$.  This translates to a numerical character
\[
(\underbrace{3n+1,\dots,3n+1}_n , \underbrace{3n,\dots,3n}_n , \underbrace{3n-1,\dots,3n-1}_{n-1})
\]
with $\sigma = 3n-1$.  Notice that this does not satisfy the last condition of Theorem \ref{BM1 result}, hence by Theorem \ref{BM1 result2} we cannot find a smooth $Y$ with these numerical properties.   However, if we perform one more basic double link, using a form in $I_Y$ of degree $3n+1$, we obtain a curve $C'$ with Hartshorne-Rao module in degree $3n-1$, initial degree of $I_{C'}$ equal to $3n+1$ (hence $C'$ has maximal rank).  Its general hyperplane section has $h$-vector
\[
(1,2,3,\dots, 3n,2n+1,n),
\]
so the numerical character of $C'$ is
\begin{equation} \label{bdl}
(\underbrace{3n+2,\dots,3n+2}_n, \underbrace{3n+1,\dots,3n+1}_{n+1}, \underbrace{3n,\dots,3n}_{n-1}).
\end{equation}
Now $\sigma = 3n$, and we observe that this numerical character does satisfy the conditions of Theorem \ref{BM1 result}.  The degree of $C'$ can be obtained by adding the entries in the $h$-vector:
\[
\deg C' = \frac{9n^2 + 9n + 2}{2}.
\]

At this point we have a curve $C' \in \mathbb L_n$ that is numerically ACM and also satisfies the conditions of Theorems \ref{BM1 result} and \ref{BM1 result2}.  This is not quite enough to guarantee that $C'$ lies in an irreducible flat family of numerically ACM elements of $\mathbb L_n$, the general one of which is smooth.  A priori, it is conceivable that there are several families, all of whose general {\em hyperplane sections} have the described Hilbert function, but some of which have elements that are numerically ACM and others of which have general elements that are smooth, but none having both.  (This could happen if different shifts of the Hartshorne-Rao module are involved.)  Our next observation is a uniqueness result that eliminates this danger.

Observe that starting with the $h$-vector of the general hyperplane section of the minimal curve, given in (\ref{h-vector of hyperplane sect}), if we perform basic double links using forms of strictly increasing degrees, the {\em only} way to obtain the numerical character (\ref{bdl}) is via the given sequence of $n+1$ basic double links.  Thus in $\mathbb L_n$, there is only one flat family of curves with this numerical character, namely the one containing $C'$, which is numerically ACM.  But the property of being numerically ACM is preserved in the flat family, and by Theorem \ref{BM1 result} this flat family contains a smooth curve, $C$.

We have thus shown:

\begin{thm} \label{family of smooth NACM}
The liaison class $\mathbb L_n$ contains a smooth, maximal rank, numerically ACM curve $C$ of degree $\frac{9n^2 + 9n + 2}{2}$.  $C$ is the  smooth curve of least degree in $\mathbb L_n$ that is numerically ACM, but it does not have least degree simply among the numerically ACM curves in $\mathbb L_n$.
\end{thm}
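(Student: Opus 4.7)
The bulk of the construction has already been carried out in the paragraphs immediately preceding the theorem: starting from a minimal element $C_0$ of $\mathbb{L}_n$, one applies the $n$ basic double links dictated by Algorithm \ref{alg2} (with forms of degrees $2n+2, 2n+3, \ldots, 3n+1$) to obtain a numerically ACM curve $Y$, and then performs one additional basic double link with a form of degree $3n+1$ to arrive at a curve $C'$ whose numerical character \eqref{bdl} satisfies the four hypotheses of Theorem \ref{BM1 result}. The plan for the proof is therefore to tie these computations together, and the key ingredient that remains to be argued is a uniqueness claim that lets us upgrade $C'$ to a smooth curve while keeping numerical ACM-ness.

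The decisive step is to show that within $\mathbb{L}_n$, the numerical character \eqref{bdl} is realized by a single flat family, and that family contains $C'$. Inspection of \eqref{h-vector of hyperplane sect} reveals that the hyperplane-section $h$-vector of $C'$ admits exactly one expression as a sequence of basic double links of strictly increasing degrees starting from the minimal one, namely the $n+1$ steps constructed above; combined with Lemma \ref{main conclusion}, this forces any curve in $\mathbb{L}_n$ with this numerical character to have the same hyperplane-section Hilbert function as $C'$ and to lie in the same shift class, hence by Lemma \ref{def lemma 2} to lie in the flat family of $C'$. Since numerical ACM-ness depends only on the Hilbert function, every member of this family is numerically ACM, and Theorem \ref{BM1 result} then produces a smooth maximal rank $C$ inside the family. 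The degree formula follows by summing the entries $1,2,\ldots,3n,2n+1,n$ of the hyperplane-section $h$-vector, which yields $\tfrac{3n(3n+1)}{2}+(3n+1) = \tfrac{9n^2+9n+2}{2}$.

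For the minimality of $C$ among smooth numerically ACM curves in $\mathbb{L}_n$, I would use Theorem \ref{BM1 result2}: any such curve has a numerical character satisfying the four BM1 conditions, so its hyperplane-section $h$-vector is reachable from \eqref{h-vector of hyperplane sect} by basic double links of strictly increasing degrees. Since the minimal $h$-vector has deficit $n$ (as computed via Proposition \ref{prop:corr-second-alg}), at least $n$ such steps are required for numerical ACM-ness, but the $n$-step outcome is exactly $Y$, whose character violates the last BM1 condition; thus at least $n+1$ steps are necessary, and the least-degree choice recovers precisely the degrees $2n+2,\ldots,3n+1,3n+1$ used to build $C$. The final remark, that $C$ is not of smallest degree among all (not necessarily smooth) numerically ACM curves in $\mathbb{L}_n$, is immediate from the fact that $Y$ is itself numerically ACM by Theorem \ref{LR for num ACM} and has strictly smaller degree, exceeding that of $C_0$ by only $n$ rather than $n+1$ basic double links. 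The main obstacle in this plan is the uniqueness argument in the middle paragraph, since without it Theorem \ref{BM1 result} guarantees only a smooth curve with the right hyperplane-section character, not one that is necessarily numerically ACM.
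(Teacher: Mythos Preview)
Your proposal is correct and follows essentially the same approach as the paper: the construction of $Y$ and $C'$, the uniqueness-of-family argument (that the numerical character \eqref{bdl} can only be reached from the minimal element by the given $n+1$ basic double links, so the flat family containing the smooth curve from Theorem~\ref{BM1 result} must coincide with the one containing $C'$), and the minimality claims via Theorem~\ref{BM1 result2} and the existence of $Y$ are exactly the paper's reasoning. Your invocation of Lemmas~\ref{main conclusion} and~\ref{def lemma 2} merely makes explicit what the paper's discussion leaves implicit.
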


Notice that when $n=1$, we obtain Harris's curve of degree 10.

\begin{question}
Does Harris's curve have  the smallest possible degree among integral  numerically ACM (but not ACM) curves in $\mathbb P^3$?
\end{question}

\begin{question}
We believe that at least for curves in $\mathbb P^3$, every even liaison class contains smooth numerically ACM elements.  Does every even liaison class of locally Cohen-Macaulay, codimension two subschemes of $\mathbb P^n$ contain smooth numerically ACM elements?
\end{question}

\begin{rem}
We wonder if the smooth numerically ACM curves in $\mathbb L_n$, or perhaps in any even liaison class of curves in $\mathbb P^3$, satisfy some sort of Lazarsfeld-Rao property, similar to what was studied in \cite{nollet} (without regard to the numerically ACM property). Since our main tool here involves only maximal rank curves in $\mathbb L^n$, we do not know the answer to this question.  We remark that thanks to the results in \cite{BM2}, a result similar to Theorem \ref{family of smooth NACM} is probably possible for smooth, maximal rank arithmetically Buchsbaum curves whose Hartshorne-Rao module has diameter two.
\end{rem}

As mentioned in the introduction, smooth curves,  even having the same Hilbert function, can behave very differently depending on whether they are ACM or not.  The following gives an interesting illustration. On an integral ACM curve, there are Gorenstein sets of points with arbitrarily large degree (see \cite{KMMNP}). This in no longer true on non-ACM curves, even if we just ask for zero-dimensional schemes.

\begin{prop} \label{gor subscheme}
Let $C$ be an integral non-ACM  curve in $\mathbb P^n$. Then there is an integer $N$, depending only on the regularity of $I_C$, such that $C$ contains no arithmetically Gorenstein  zero-dimensional scheme of degree $> N$.  In fact, $N = \deg C \cdot \reg I_ C$ has this property.
\end{prop}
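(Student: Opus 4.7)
My plan is to combine a Hilbert function comparison with a Bezout-type estimate. Write $d = \deg C$, $r = \reg I_C$, and let $Z \subset C$ be an arithmetically Gorenstein zero-dimensional subscheme of socle degree $s$, so that $\reg I_Z = s+1$ and the $h$-vector of $Z$ is a symmetric sequence $(h_0, h_1, \ldots, h_s)$ with $h_0 = h_s = 1$.

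First I would set up the Hilbert function comparison via a general hyperplane section. Integrality of $C$ makes $R/I_C$ a domain; the arithmetically Gorenstein hypothesis makes $R/I_Z$ Cohen-Macaulay of dimension one. A general linear form $\ell$ is therefore a non-zero-divisor on both rings, so the Hilbert function of $R/(I_Z,\ell)$ equals the $h$-vector of $Z$, while the Hilbert function of $R/(I_C,\ell)$ equals the first difference $\Delta h_{R/I_C}$. The inclusion $I_C \subseteq I_Z$ then yields the termwise bound $h^{(Z)}_t \leq \Delta h_{R/I_C}(t)$ for every $t$, and summing gives
\[
\deg Z = \sum_{t=0}^{s} h^{(Z)}_t \;\leq\; \sum_{t=0}^{s}\Delta h_{R/I_C}(t) \;=\; h_{R/I_C}(s).
\]
Since $\reg I_C = r$, the Hilbert function of $R/I_C$ agrees with its Hilbert polynomial $dt + 1 - p_a$ for $t \geq r-1$, where $p_a \geq 0$ is the arithmetic genus. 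In the range $s \leq r-1$ this immediately yields $\deg Z \leq h_{R/I_C}(r-1) = d(r-1) + 1 - p_a \leq dr$, using $d \geq 1$ and $p_a \geq 0$.

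For $s \geq r$ I would then use Bezout: the strict inequality $\deg Z \leq h_{R/I_C}(s) < h_{R/I_C}(s+1)$ forces $[I_C]_{s+1} \subsetneq [I_Z]_{s+1}$, producing a form $f \in I_Z \setminus I_C$ of degree $s+1$; since $C$ is integral, $C \not\subset V(f)$, so $Z \subset C \cap V(f)$ is zero-dimensional of degree at most $d(s+1)$. This gives $\deg Z \leq d(s+1) = d\cdot\reg I_Z$. Thus the proof reduces to showing $\reg I_Z \leq r$, and this is the main obstacle. On an ACM curve, arithmetically Gorenstein subschemes of arbitrarily large socle degree do exist, so the non-ACM hypothesis on $C$ must be used essentially here: the non-vanishing Hartshorne-Rao module $H^1_*(\mathcal{I}_C)$ forces $h_{R/I_C}(t)$ to fall short of its polynomial value in some degrees below $r-1$, and this deficit, combined with the symmetry $h^{(Z)}_t = h^{(Z)}_{s-t}$ of the $h$-vector of $Z$ and the stabilization $\Delta h_{R/I_C}(t) = d$ for $t \geq r$, should contradict the existence of an AG subscheme with $s \geq r$. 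The hard part will be turning this heuristic into a clean argument that $\reg I_Z \leq r$; once established, the Bezout step above closes the proof.
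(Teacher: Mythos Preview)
Your argument for the case $s \le r-1$ is correct, but the case $s \ge r$ is genuinely incomplete: you explicitly reduce to the claim $\reg I_Z \le r$ and then offer only a heuristic. This is not a minor detail to be filled in later; it is the entire content of the proposition, since nothing you have written so far uses that $C$ is non-ACM. Worse, the statement you reduce to is suspect. Consider $k$ collinear points on $C$: this is a complete intersection of type $(1,\dots,1,k)$, hence arithmetically Gorenstein with $\reg I_Z = k$. There is no obvious reason why an integral non-ACM curve cannot have a $k$-secant line with $k > \reg I_C$, and your heuristic about the Hartshorne-Rao module and $h$-vector symmetry does not address such examples at all. So the reduction may well be to a false statement, and in any event you have not proved it.

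The paper's argument is entirely different and avoids any bound on $\reg I_Z$. One works directly with minimal free resolutions. Let $\mathbb F_\bullet$ and $\mathbb G_\bullet$ resolve $I_C$ and $I_Z$. Because $C$ is non-ACM and $Z$ is zero-dimensional, both resolutions have length exactly $n-1$; in particular $\mathbb F_{n-1} \neq 0$, and this is the only place the non-ACM hypothesis is used. Now assume $\deg Z > d r$. Your own B\'ezout observation (any $f\in I_Z$ of degree $\le r$ not in $I_C$ would cut out a scheme of length $\le d r$ on $C$) shows $[I_Z]_j = [I_C]_j$ for $j\le r$, hence the graded Betti numbers agree in the rows up through row $r-1$; since $\reg I_C = r$, this is the entire Betti diagram of $I_C$. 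Thus every summand of $\mathbb F_{n-1}$ is a summand of $\mathbb G_{n-1}$. But $I_Z$ also has a minimal generator in degree $> r$, which forces the top twist of $\mathbb G_{n-1}$ to exceed the top twist of $\mathbb F_{n-1}$. Hence $\mathbb G_{n-1}$ has at least two summands, so $Z$ is not arithmetically Gorenstein. The non-ACM hypothesis enters cleanly as $\mathbb F_{n-1}\neq 0$, rather than through any delicate Hilbert-function estimate.
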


\begin{proof}
Let $Z \subset C$ be a zero-dimensional scheme. Let $\mathbb F_\bullet$ be the minimal free resolution of $I_C$, and let $\mathbb G_\bullet$ be the minimal free resolution of $I_Z$. The length of both $\mathbb F_\bullet$ and $\mathbb G_\bullet$ is $n-1$ (the former since $C$ is not ACM).

Let $d = \reg I_ C$.   In particular, all the minimal generators of $I_C$ have degree $\leq d$.  In fact, in the Betti diagram for $\mathbb F_\bullet$, the last non-zero row is the $d$-th one.

 Now assume that $|Z| > d \cdot \deg C$.
Since $C$ is integral, any hypersurface of degree $\leq d$ that contains $Z$ also contains $C$, so  any minimal generator of $I_Z$ that is not in $I_C$ has degree $>d$.  Consequently, the first $d$ rows of the Betti diagram for $I_Z$ are precisely the Betti diagram for $I_C$.  It also follows that the largest twist of $\mathbb G_{n-1}$ is strictly larger than the largest  twist of $\mathbb F_{n-1}$. But the summands of $\mathbb F_{n-1}$ are also summands of $\mathbb G_{n-1}$.  Thus $\mathbb G_{n-1}$ has at least two summands, so $Z$ cannot be arithmetically Gorenstein.
\end{proof}

%%%%%%%%%%%%%%%%%%%%%%%%%%%%%%%%%%%%%%%%%%%%%%%

\section{Examples} \label{example section}

We illustrate our algorithms and results by a few more examples, and we raise some questions that, we believe, deserve further consideration.

\begin{example} \label{illustrate both approaches}
Let $C_1$ be the scheme in $\mathbb P^3$ defined by the cube of the ideal of a general line.  Let $C_2$ and $C_3$ be general complete intersections of types $(1,2)$ and $(4,8)$ respectively.
Let $C = C_1 \cup C_2 \cup C_3$. The Betti diagram for $R/I_C$ is

\begin{verbatim}

        0    1    2    3
-------------------------
 0:     1    -    -    -
 1:     -    -    -    -
 2:     -    -    -    -
 3:     -    -    -    -
 4:     -    -    -    -
 5:     -    -    -    -
 6:     -    -    -    -
 7:     -    4    3    -
 8:     -    4    7    3
 9:     -    -    -    -
10:     -    -    -    -
11:     -    5    4    -
12:     -    4    8    4
13:     -    -    4    3
-------------------------
Tot:    1   17   26   10
\end{verbatim}

\noindent and the $h$-vector of $R/I_C$ is $(1, 2, 3, 4, 5, 6, 7, 8, 5, 1, 4, 4, -1, -6, -3)$.  We illustrate the two algorithms, and why they produce numerically equivalent results.  We begin with the first algorithm.

Collecting our lists of $\{r_i \}$ and $\{ s_i \}$, we obtain
\[
\begin{array}{cccccccccccccccccccccccccccccccccccc}
\{s_i\} & 15 & 15 & 15 & 15 & 14 & 14 & 14 & 14 & 14 & 14 & 14 & 14 & 13 & 13 & 13 & 13 & 10 & 10 & 10 & 10 \\
\{ r_i \} & 16 & 16 & 16 & 15 & 15 & 15 & 15 & 13 & 13 & 13 & 13 & 12 & 12 & 12 & 12 & 12 & 11 & 11 & 11 & 9
\\ \\
 & 10 & 10 & 10 & 9 & 9 & 9 \\
& 9 & 9 & 9 & 8 & 8 & 8 & 8
\end{array}
\]
However, we first remove duplicates and re-align the lists:
\[
\begin{array}{cccccccccccccccccccccccccccccccccccc}
\{s_i\} & 14 & 14 & 14 & 14 & 14 & 14 & 14 & 14 & 10 & 10 & 10 & 10  & 10 & 10 & 10 \\
\{ r_i \} & 16 & 16 & 16 & 12 & 12 & 12 & 12 & 12 & 11 & 11 & 11 & 9 & 8 & 8 & 8 & 8
\end{array}
\]
The negative entries of the main diagonal of the matrix are precisely those integers for which $s_i < r_i$.  We thus immediately see that we will need three basic double links of height 2 and three of height 1, a fact that was not at all evident before removing the duplicates.    More careful analysis shows that in fact the sequence of basic double links consists of types
\[
(11,1), (12,1), (13,1), (19,2), (21,2), \hbox{ and } ( 23,2).
\]
Notice that the  sum of the heights of the basic double links is $1+1+1+2+2+2 = 9$.

As for the second algorithm, we begin with the $h$-vector
\[
(
\begin{array}{ccccccccccccccccccccccc}
1, &  2, &  3, & 4, & 5, & 6, & 7, & 8, &  5, & 1, & 4, & 4, & -1, & -6, & -3.
\end{array}
)
\]
We look for places where the value in one degree is smaller than that in the next degree.  The total deficit is $3 + 3 + 3 = 9$, which is equal to the sum of the heights, i.e. the sum of the absolute values of the negative entries on the main diagonal of the matrix.  In fact, Algorithm \ref{alg2} gives that we must use a sequence of basic double links of type $(d_i,1)$ where $d_i$ takes the values 11, 12, 13, 18, 19, 20, 22, 23, 24.

The resulting curve from the first approach in the same flat family of the even liaison class as the curve resulting from the second approach because the two curves have the same Hilbert function by Proposition \ref{prop:relate-alg}.  Alternatively, we can see this by replacing the basic double links used in the first algorithm by   numerically equivalent basic double links.  First, using Lemma \ref{bm4 result 1} three times we see that we have a sequence of basic double links of type $(d_i,1)$ with $d_i$ taking the values 11, 12, 13, 19, 19, 21, 21, 23, 23.  Applying Lemma \ref{bm4 result 2} six times to this 9-tuple gives sequentially the 9-tuples
\[
\begin{array}{ccccccccccccccccccccc}
11 & 12 & 13 & 19 & 19 & 21 & 21 & 23 & 23 \\
11 & 12 & 13 & 18 & 20 & 21 & 21 & 23 & 23 \\
11 & 12 & 13 & 18 & 20 & 20 & 22 & 23 & 23 \\
11 & 12 & 13 & 18 & 19 & 21 & 22 & 23 & 23 \\
11 & 12 & 13 & 18 & 19 & 21 & 22 & 22 & 24 \\
11 & 12 & 13 & 18 & 19 & 21 & 21 & 23 & 24 \\
11 & 12 & 13 & 18 & 19 & 20 & 22 & 23 & 24
\end{array}
\]
where the last row represents the same sequence of basic double links prescribed (in the end) by the first approach.

We note that in general it seems rather complicated to show directly that the sequences of basic double links used in Algorithms \ref{algorithm} and \ref{alg2} are numerically equivalent.
\end{example}

\begin{example} \label{11 pts}
Let $I \subset k[w,x,y,z]$ be the ideal of a set $Z$ of 11 general points in $\mathbb P^3$.  The Betti diagram for $R/I$ is
\begin{verbatim}
                                    0    1    2    3
                            -------------------------
                             0:     1    -    -    -
                             1:     -    -    -    -
                             2:     -    9   12    3
                             3:     -    -    -    1
                            -------------------------
                            Tot:    1    9   12    4
\end{verbatim}

Even though $I$ has codimension three, we can still apply Algorithm \ref{algorithm} to $I$.  We perform a sequence of basic double links of type $(5,1)$, $(6,1)$, $(7,1)$ and $(9,2)$, obtaining an ideal with Betti diagram
\begin{verbatim}
                                     0    1    2    3
                             -------------------------
                              0:     1    -    -    -
                              1:     -    -    -    -
                              2:     -    -    -    -
                              3:     -    -    -    -
                              4:     -    -    -    -
                              5:     -    -    -    -
                              6:     -    -    -    -
                              7:     -    9   12    3
                              8:     -    4    3    1
                              9:     -    -    1    -
                             -------------------------
                             Tot:    1   13   16    4
\end{verbatim}
and $h$-vector $[1, 2, 3, 4, 5, 6, 7, 8]$.
\end{example}

\begin{example}
Let $Z$ be a set of 11 general points in $\mathbb P^3$ as in Example \ref{11 pts}, and let $I$ be the ideal generated by a general set of four forms in $I_Z$ of degree 4.  $I$ defines $Z$ scheme-theoretically, but is not saturated.    The Betti diagram for $R/I$ is
\begin{verbatim}
                                      0    1    2    3    4
                              ------------------------------
                               0:     1    -    -    -    -
                               1:     -    -    -    -    -
                               2:     -    -    -    -    -
                               3:     -    4    -    -    -
                               4:     -    -    -    -    -
                               5:     -    -    -    -    -
                               6:     -    -    6    -    -
                               7:     -    -    -    -    -
                               8:     -    -    1    -    -
                               9:     -    -    3   16    9
                              ------------------------------
                              Tot:    1    4   10   16    9
\end{verbatim}
Applying Algorithm \ref{algorithm} involves a sequence of seven basic double links, resulting in a Betti diagram
\begin{verbatim}
                                    0    1    2    3    4
                            ------------------------------
                             0:     1    -    -    -    -
                             1:     -    -    -    -    -
                                          ...
                            19:     -    -    -    -    -
                            20:     -    4    -    -    -
                            21:     -    -    -    -    -
                            22:     -    -    -    -    -
                            23:     -    -    6    -    -
                            24:     -    3    -    -    -
                            25:     -    -    1    -    -
                            26:     -    1    3   16    9
                            27:     -    3    7    -    -
                            ------------------------------
                            Tot:    1   11   17   16    9
\end{verbatim}
and an $h$-vector
\[
[1, 2, 3, 4, 5, 6, 7, 8, 9, 10, 11, 12, 13, 14, 15, 16, 17, 18, 19, 20, 21, 18, 15, 12, 9, 9, 9, 9, 9].
\]

\end{example}

\begin{example} \label{2nd half of ex 5.9}
  The Betti diagram of the  curve studied in Example \ref{motivating example} is

                           \begin{verbatim}
                                   0    1    2    3
                           -------------------------
                            0:     1    -    -    -
                            1:     -    -    -    -
                            2:     -    -    -    -
                            3:     -    -    -    -
                            4:     -    -    -    -
                            5:     -    -    -    -
                            6:     -    -    -    -
                            7:     -    -    -    -
                            8:     -    -    -    -
                            9:     -    2    1    -
                           10:     -    -    -    -
                           11:     -    2    3    1
                           12:     -    1    -    -
                           13:     -    1    1    -
                           14:     -    1    -    -
                           15:     -    1    5    2
                           16:     -    2    4    2
                           17:     -    2    2    -
                           -------------------------
                           Tot:    1   12   16    5
\end{verbatim}
One can verify that this gives a matrix with only positive entries in the main diagonal (after removing redundant terms), and so the curve is numerically ACM.
\end{example}

\begin{rem}
Using the methods of this paper, many ACM Hilbert functions of curves can be obtained starting from non-ACM curves. It would be interesting to know which ACM Hilbert functions do {\em not} occur in this way, i.e. which force the curve to be ACM (a trivial example is  if it is a plane curve).  Is it a finite list?  How does the question change if we restrict to smooth or integral curves?  This question has been studied from the point of view of the Hilbert function of the general hyperplane section (see e.g. \cite{gorla}) but this is a different question!
\end{rem}

\begin{rem}
It will be noted that all of our numerically ACM subschemes have codimension two.  It would be interesting to find a construction that produces numerically ACM subschemes of higher codimension.  In this paper we have heavily used methods and results that apply only to codimension two, so it is unlikely that results as complete as those given here will be obtained for higher codimension.  Still, it is an interesting problem.
\end{rem}

%%%%%%%%%%%%%%%%%%%%%%%%%%%%%%%%%%%%%%%%%%%%%%%%

\noindent {\bf Acknowledgement.} The first author is very grateful to Joe Harris, not only for the letter in 1983 explaining Joe's example, but even more for the preceding five years in which he patiently explained so much more!

%%%%%%%%%%%%%%%%%%%%%%%%%%%%%%%%%%%%%%%%%%%%%%%

\end{document}